\documentclass[onecolumn,11pt]{IEEEtran}

\usepackage[usenames,dvipsnames]{color}

\usepackage{amsmath,amsthm,amssymb,latexsym,cite}
\usepackage{graphicx}
\usepackage{subfigure}

\usepackage{url}

\newtheorem{corollary}{Corollary}
\newtheorem{lemma}{Lemma}
\newtheorem{theorem}{Theorem}

\newtheorem{assume}{Condition}

\def\expe{\mathbb{E}}   %
\def\prob{\mathbb{P}}   %

\newcommand{\norm}[1]{\left\| #1 \right\|}
\def\dmax{d_{\mathrm{max}}}
\newcommand{\trans}{{\top}}

\newcommand\defeq{\stackrel{\triangle}{=}}
\newcommand{\resp}[1]{#1}

\def\mc{\mathcal}
\def\mbf{\mathbf}

\begin{document}

\title{Distributed Learning of Distributions via Social Sampling}

\author{Anand D.~Sarwate,~\IEEEmembership{Member,~IEEE,} 
	and~Tara Javidi,~\IEEEmembership{Senior Member,~IEEE,}
        \thanks{
        Manuscript received May 17, 2013; revised January 9, 2014; second revision .  
        Date of current version \today.
The work of both authors was funded by National Science Foundation (NSF) Grant Number CCF-1218331.  A.D. Sarwate received additional support from the California Institute for Telecommunications and Information Technology (CALIT2) at UC San Diego.  T. Javidi received additional support from NSF Grant Numbers CCF-1018722 and CNS-1329819.  Simulations were supported by the UCSD FWGrid Project, NSF Research Infrastructure Grant Number EIA-0303622.
Parts of this work were presented at the 46th Annual Conference on Information Sciences and Systems (CISS), Princeton, NJ, USA, March 2012~\cite{SarwateJ:12ciss}, and the 49th Annual Allerton Conference on Communication, Control and Computation, Monticello, IL, USA, September 2011~\cite{SarwateJ:11allerton}. 
	}
	\thanks{Anand D. Sarwate is with the Department of Electrical and Computer Engineering, Rutgers, The State University of New Jersey, 84 Brett Road, Piscataway, NJ 08854, USA.  Email: \texttt{asarwate@ece.rutgers.edu}.
Tara Javidi is with the Department of Electrical and Computer Engineering, University of California, San Diego, 9500 Gilman Dr. MC 0407, La Jolla CA 92093-0407, USA. Email: \texttt{tjavidi@ucsd.edu}.
}%
\thanks{Copyright (c) 2014 IEEE. Personal use of this material is permitted. However, permission to use this material for any other purposes must be obtained from the IEEE by sending a request to \texttt{pubs-permissions@ieee.org}.}        
}

\maketitle

\begin{abstract}
A protocol for distributed estimation of discrete distributions is proposed.  Each agent begins with a single sample from the distribution, and the goal is to learn the empirical distribution of the samples.  The protocol is based on a simple message-passing model motivated by communication in social networks.  Agents sample a message randomly from their current estimates of the distribution, resulting in a protocol with quantized messages.  Using tools from stochastic approximation, the algorithm is shown to converge almost surely.  Examples illustrate three regimes with different consensus phenomena.  Simulations demonstrate this convergence and give some insight into the effect of network topology.
\end{abstract}

\section{Introduction}

The emergence of large-network paradigms for communications and the widespread adoption of social networking technologies has resurrected interest in classical models of opinion formation and distributed computation as well as new approaches to distributed learning and inference.  In this paper we propose a  simple message passing protocol inspired by social communication and show how it allows a network of individuals can learn about global phenomena.  In particular, we study a situation wherein each node or agent in a network holds an initial opinion and the agents communicate with each other to infer the distribution of their initial opinions.  Our model of messaging is a simple abstraction of social communication in which individuals exchange single opinions.  This model is a randomized approximation of consensus procedures.  Because agents collect samples of the opinions of their neighbors, we call our model \textit{social sampling}.

In our protocol agents merge the sampled opinions of their neighbors with their own estimates using a weighted average.  Averaging has been used to model opinion formation for decades, starting with the early work of French \cite{French:56power}, Harary \cite{Harary:59criterion}, and DeGroot \cite{DeGroot:74consensus}.  These works focused on averaging as a means to an end -- averaging the local opinions of a group of peers was a simple way to model the process of negotiation and compromise of opinions represented as scalar variables.  A natural extension of the above work is that where all agents are interested in the 
local reconstruction of the empirical distribution of discrete opinions.  Such locally constructed empirical distributions not only provide richer information about global network properties (such as the outcome of a vote, the confidence interval around the mean, etc), but from a statistical estimation perspective provide estimates of local sufficient statistics when the agents' opinions are independent and identically distributed (i.i.d.) samples from a common distribution.  %

For opinions taking value in a finite, discrete set, we can \textit{compute} the empirical distribution of opinions across a network by running an average consensus algorithm for each possible value of the opinion.  This can even be done in parallel so that at each time agents exchange their entire histogram of opinions and compute weighted averages of their neighbors' histograms to update their estimate.  In a social network, this would correspond to modeling the interaction of two agents as a complete exchange of their entire beliefs in every opinion, which is not realistic.  In particular, if the number of possible opinions is large (corresponding to a large number of bins or elements in the histogram), communicating information about all opinions may be very inefficient, especially if the true distribution of opinions is far from uniform.

In contrast, this paper considers a novel model in which agents' information is disseminated through randomly selected samples of locally constructed histograms~\cite{SarwateJ:11allerton}.  The use of random  samples results in a much lower overhead because it accounts for the popularity/frequency of histogram bins and naturally enables finite-precision communication among neighboring nodes.  It is not hard to guarantee that the expectation of the node estimates converges to the true histogram when the mean of any given (randomized and noisy) shared sample is exactly the local estimate of the histogram.  However, to ensure convergence in an almost sure sense we use techniques from stochastic approximation.  We identify three interesting regimes of behavior.  In the first, studied by Narayanan and Niyogi \cite{Narayanan11:lang}, agents converge to atomic distributions on a common single opinion.  In the second, agents converge to a common consensus estimate which in general is not equal to the true histogram.  Finally, we demonstrate a randomized protocol which, under mild technical assumptions, ensures convergence of agents' local estimates to the global histogram almost surely.  The stochastic approximation point of view suggests that a set of decaying weights can control the accumulation of noise along time and still compute the average histogram.

\subsection*{Related work}

In addition to the work in mathematical modeling of opinion formation~\cite{French:56power,Harary:59criterion,DeGroot:74consensus}, there has been a large body of work on consensus in terms of decision making initiated by Aumann~\cite{Aumann:76disagree}.  Borkar and Varaiya~\cite{BorkarV:82} studied distributed agreement protocols in which agents are trying to estimate a common parameter.  The agents randomly broadcast conditional expectations based on all of the information they have seen so far, and they  find general conditions under which the agents would reach an asymptotic agreement.  If the network is sufficiently connected (in a certain sense), the estimates converge to the centralized estimate of the parameter, even when the agents' memory is limited~\cite{TsitsiklisA:84}.  In these works the questions are more about whether agreement is possible at all, given the probability structure of the observation and communication.

There is a significant body of work on consensus and information aggregation in sensor networks~\cite{Fax:01thesis,FaxM:04flow,OlfatiSaberM:04consensus,AgaevC:00digraph,AgaevC:01spanning,ChebotarevA:02forest,5485031,5485032, BoydIT}.  From the protocol perspective, many authors have studied the effect of network topology on the rate of convergence of consensus protocols~\resp{\cite{MurraySurvey, Fax:01thesis,FaxM:04flow,OlfatiSaberM:04consensus,5485031,5485032,Fagn08,journals/siamco/OlshevskyT09,BoydIT}.}  For communication networks the speed can be accelerated by exploiting network properties \cite{DimakisSW:08gossip,BDTVPath,Tunc09,SarwateD:12mobility} (see surveys in \cite{MurraySurvey,Dimakis10gossipsurvey} for more references).  Others have studied how quantization constraints impact convergence \cite{Aysal07,NedicOOT:09dist,CarliBZ:10dynamic,KashyapBS:07quant,CarliFFZ:10quantgossip,ZhuM:11time,LavaeiM:10gossip, 5719290}.  However, in all of these works the agents are assumed to be some sort of computational devices like robotic networks or sensor networks.  A comprehensive view of this topic is beyond the scope of this paper. Instead, we focus on a few papers most relevant to our model and study: consensus with quantized messages and consensus via stochastic approximation. However, it is important to note that in contrast to all the studies discussed below,  our work primarily deals with an extension of the classic consensus (linear combination of private values) in that we are interested in  ensuring agreement over the space of distributions (histograms). 

Our goal in this paper is to ensure the convergence of each agent's local estimate to  a true and global discrete distribution via a low-overhead algorithm in which messages are chosen in a discrete set.  Our work is therefore related to the extensive recent literature on quantized consensus\cite{KashyapBS:07quant,CarliFFZ:10quantgossip,NedicOOT:09dist,LavaeiM:10gossip}.   In these works, as in ours, the communication between nodes is discretized (and in some cases the storage/computation at nodes as well~\cite{KashyapBS:07quant}) and the question is how to ensure consensus (within a bin) to the average.  This is in sharp contrast to our model which uses discrete messages to convey and ensure consensus on the network-wide histogram of discrete values.   As a result, in contrast to the prior work on quantization noise \cite{YildizS:08coding,Aysal07,CarliFFZ:10quantgossip}, the ``noise" is manufactured by our randomized sample selection scheme and hence plays a significantly different role.  

Our analysis uses similar tools from stochastic approximation as recent studies of consensus protocols~\cite{KarM:10quant,5719290,5575452}.  However, these works use stochastic approximation to address the effect of random noise in network topology, message transmission, and computation for a scalar consensus problem, while our use of standard theorems in stochastic approximation is to handle the impact of the noise that comes from the sampling scheme that generates our random messages. In other words, our noise is introduced by design even though our technique to control its cumulative effect is similar. 

\section{Model and Algorithms}


Let $[n]$ denote the set $\{1, 2, \ldots, n\}$ and let $\mbf{e}_i \in \mathbb{R}^M$ denote the $i$-th elementary row vector in which the $i$-th coordinate is $1$ and all other coordinates are $0$.  Let $\mbf{1}(\cdot)$ denote the indicator function and $\mathbf{1}$ the column vector whose elements are all equal to 1.  Let $\norm{\cdot}$ denote the Euclidean norm for vectors and the Frobenius norm for matrices.  We will represent probability distributions on finite sets as row vectors, \resp{ and denote the set of probability 
distributions on a countable set $A$ by $\mathbb{P}(A)$. } 

\subsection{Problem setup}

Time is discrete and indexed by $t \in \{0,1,2,\ldots\}$.  The system contains $n$ agents or ``nodes.'' At time $t$ the agents can communicate with each other according to an undirected graph $\mc{G}(t)$ with vertex set $[n]$ and edge set $\mc{E}(t)$.  Let $\mc{N}_i(t) = \{ j : (i,j) \in \mc{E}(t) \}$ be the set of neighbors of node $i$.  If $(i,j) \in \mc{E}(t)$ then nodes $i$ and $j$ can communicate at time $t$.  Let $G(t)$ denote the adjacency matrix of $\mc{G}(t)$ and let $D(t)$ be the diagonal matrix of node degrees.  The Laplacian of $\mc{G}(t)$ is $L(t) = D(t) - G(t)$.

At time $0$ every node starts with a single discrete \textit{sample} $X_i \in \mc{X} = [M]$.  The goal of the network is for each node to estimate the empirical distribution, or normalized histogram, of the observations $\{X_i : i \in [n] \}$:
	\begin{align*}
	\Pi(x) = \frac{1}{n} \sum_{i=1}^{n} \mbf{1}(X_i = x) \cdot \mbf{e}_x \qquad \forall x \in [M].
	\end{align*}

To make it simpler to characterize the overall communication overhead, we assume that 
	\begin{itemize}
	\item Agents can exchange messages $Y_i(t)$ lying in a finite set $\mc{Y}$.
	\item At each time $t = 1, 2, \ldots$ agents can transmit a single message to all of their neighbors.
	\end{itemize}
\resp{At each time $t = 0, 1, 2, \ldots$, each node $i$ maintains an internal estimate $Q_i(t)$ of the distribution $\Pi$, and we take $Q_i(0) = \mbf{e}_{X_i}$.   Every node $i$ generates its message $Y_i(t) \in \mc{Y}$ as a function of  this internal estimate $Q_i(t)$.  Furthermore, each node $i$ receives the messages $\{ Y_j(t) : j \in \mc{N}_i \}$ from its neighbors and use these messages to perform an update of its estimate $Q_i(t+1)$ using  $Q_i(t)$, $\{Y_j(t) : j \in \mc{N}_i \}$.} 

\resp{	
Since in a single time $t$ there are potentially $2 |\mc{E}(t)|$ messages transmitted in the network, a first approximation for the communication overhead of this class of  schemes is simply proportional to the number of edges in the network multiplied by the logarithm of the cardinality of set $\mc{Y}$.}

\resp{
We are interested in the properties of the estimates $Q_i(t)$ as $t \to \infty$.  In particular, we are interested in the case where every element in the set $\{Q_i(t) : i \in n\}$ converges almost surely to a common random variable $\mbf{q}^{*}$.  In this case, we call the random vector $\mbf{q}^{*}$ the consensus variable.  Different algorithms that we consider will result in different properties of this consensus variable.  For example, the we will consider the support of the distribution of $\mbf{q}^{*}$ as well as its expectation.  }

\subsection{Social sampling and linear updates}

\resp{
In this paper we assume that $\mc{Y} = \{\mbf{0}, \mbf{e}_1, \mbf{e}_1, \ldots \mbf{e}_M\}$, with 
the convention that node $i$ transmits nothing (or remains silent) when $Y_i(t) = \mbf{0}$. 
Furthermore, we
consider the class of schemes where the random message $Y_i(t) \in \mc{Y}$ of node $i$ at time $t$ is generated 
according to a distribution $P_i(t) \in \mathbb{P}(\mc{Y})$ which itself is a function of the estimate $Q_i(t)$.  
In other words, $P_i(t)$ is a row vector of length $M$ where $\prob(Y_i(t) = \mbf{e}_m) = P_{i,m}(t)$.}
We frequently refer to the random messages $Y_i(t) \in \mc{Y}, i\in[n], t=0,1, \ldots $ as \textit{social samples} because they correspond to nodes obtaining random samples of their neighbor's opinions.  \resp{ 
Note that although the random variable $Y_i(t)$ takes values in $\mathbb{R}^{M}$, it is supported only on the finite set $\mc{Y}$ and 
hence requires communicating $\log |\mc{Y}|$ information bits.} 

\resp{ For simplicity in this paper, we often rely on matrix representation across the network.  }
Accordingly, let $\mbf{Y}(t)$ be the $n \times M$ matrix whose $i$-th row is $Y_i(t)$.  Then we have $\expe[\mbf{Y}(t)] = \mbf{P}(t)$.  

\resp{ 
Let $\{W(t) : t = 0, 1, 2, \ldots\}$ be a sequence of $n \times n$ matrices with nonnegative entries, such that $W_{ij}(t) = 0$ for all $(i,j) \ne \mc{E}(t)$. }
 We study linear updates of the form
	\begin{align}
	Q_i(t+1) &= (1 - \delta(t) A_{ii}(t) ) Q_i(t) - \delta(t) B_{ii}(t) Y_i(t) \nonumber \\
		&\hspace{1in} + \sum_{j \in \mc{N}_i(t)} \delta(t) W_{ij}(t) Y_j(t).
		\label{eq:linearupdate}
	\end{align}
Here the parameter $\delta(t)$ is a step size for the algorithm.  Let $\mbf{Q}(t)$ be the $n \times M$ matrix whose $i$-th row is $Q_i(t)$.  We can write the iterates more compactly as
	\begin{align*}
	\mbf{Q}(t+1) &= (I - \delta(t) A(t)) \mbf{Q}(t) - \delta(t) B(t) \mbf{Y}(t) \nonumber \\
	&\hspace{1in} + \delta(t) W(t) \mbf{Y}(t),
	\end{align*}
where $A(t)$ and $B(t)$ are diagonal matrices.

In the next section we will analyze this update and identify conditions under which the estimates $Q_i(t)$ converge to a common 
$q^{\ast} \resp{ \in \mathbb{P}(\mc{Y})}$ and additional conditions under which $q^{\ast} = \Pi$.  To provide a unified analysis of these different algorithms, we transform the update equation into a stochastic iteration
	\begin{align}
	\mbf{Q}(t+1) = \mbf{Q}(t) + \delta(t) \left[ \bar{H}(t) \mbf{Q}(t) + \mbf{C}(t) + \mbf{M}(t) \right].
	\label{eq:compactupdate}
	\end{align}
In this form of the update, the matrix $\bar{H}(t)$ represents the mean effect of the network connectivity, $\mbf{M}(t)$ is a martingale difference term related to the randomness in the network topology and social sampling, and $\mbf{C}(t)$ is a correction term associated with the difference between the estimate $\mbf{Q}(t)$ and the sampling distribution $\mbf{P}(t)$.  

\begin{lemma}
The iteration in \eqref{eq:linearupdate} can be rewritten as \eqref{eq:compactupdate},
where 
\begin{align}
	\bar{H}(t) & \defeq  \bar{W}(t) - \bar{B}(t) - \bar{A}(t) \label{eq:hdef} \\
	\mbf{C}(t) & \defeq  \left( W(t) - B(t) \right) \left( \mbf{P}(t) - \mbf{Q}(t) \right) 
		\nonumber \\ & \hspace{0.12in}
		+ \left( W(t) - B(t) - \bar{W}(t) +\bar{B}(t) \right) \mbf{Y}(t) 
		\label{eq:perturbdef} \\
 \mbf{M}(t) &  \defeq \big( W(t) - B(t) - A(t)  
			- \bar{W}(t) + \bar{B}(t) + \bar{A}(t) \big) \mbf{Q}(t)
		\nonumber \\ 
		&\hspace{0.3in}
		+ \left( \bar{W}(t) - \bar{B}(t) \right) \left( \mbf{Y}(t) - \mbf{P}(t) \right)
		\nonumber \\ 
		& \hspace{0.3in}
		+ \left( \bar{W}(t) - \bar{B}(t) - W(t) + B(t) \right) \mbf{P}(t). \label{eq:MartingaleDiffdef} 
\end{align}
and the term $\mbf{M}(t)$ is a martingale difference sequence:
	\begin{align*}
	\expe[ \mbf{M}(t) | \mc{F}_t ] = \mbf{0}.
	\end{align*}
\end{lemma}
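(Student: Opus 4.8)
The plan is to treat this as a two-stage verification: first confirm the purely algebraic identity that rewrites \eqref{eq:linearupdate} in the form \eqref{eq:compactupdate}, and then check the zero-conditional-mean property of $\mbf{M}(t)$ summand by summand. For the first stage, I would start from the compact matrix form of \eqref{eq:linearupdate} and isolate the increment, writing $\mbf{Q}(t+1) - \mbf{Q}(t) = \delta(t)\bigl[-A(t)\mbf{Q}(t) + (W(t) - B(t))\mbf{Y}(t)\bigr]$. Comparing with \eqref{eq:compactupdate}, it then suffices to show that $\bar{H}(t)\mbf{Q}(t) + \mbf{C}(t) + \mbf{M}(t)$ equals this bracketed expression. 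Substituting the definitions \eqref{eq:hdef}--\eqref{eq:MartingaleDiffdef} and grouping the resulting terms by their right-hand factor among $\mbf{Q}(t)$, $\mbf{Y}(t)$, $\mbf{P}(t)$, I would verify three cancellations: the coefficients of $\mbf{Q}(t)$ collapse to $-A(t)$, the coefficients of $\mbf{Y}(t)$ collapse to $W(t) - B(t)$, and the coefficients of $\mbf{P}(t)$ sum to zero. Each is a short bookkeeping exercise in which the barred matrices $\bar{A},\bar{B},\bar{W}$ pair off against their unbarred counterparts.

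For the second stage, I would fix the natural filtration $\mc{F}_t$ under which the history through time $t$—and hence $\mbf{Q}(t)$, the sampling law $\mbf{P}(t)$, and the conditional means $\bar{A}(t),\bar{B}(t),\bar{W}(t)$—is measurable, while the current topology matrices $A(t),B(t),W(t)$ and the social samples $\mbf{Y}(t)$ remain random with $\expe[A(t)|\mc{F}_t]=\bar{A}(t)$ (and likewise for $B,W$) and with $\expe[\mbf{Y}(t)|\mc{F}_t]=\mbf{P}(t)$ by the sampling mechanism. I would then take $\expe[\cdot|\mc{F}_t]$ of each of the three summands of $\mbf{M}(t)$ in \eqref{eq:MartingaleDiffdef} separately, in each case invoking the ``take out what is known'' rule to pull the $\mc{F}_t$-measurable factor out of the expectation. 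The first summand has $\mbf{Q}(t)$ as its right factor, so its conditional mean is $(\bar{W}-\bar{B}-\bar{A}-\bar{W}+\bar{B}+\bar{A})\mbf{Q}(t)=\mbf{0}$; the third summand has $\mbf{P}(t)$ as its right factor and vanishes for the same reason; and the middle summand factors as $(\bar{W}(t)-\bar{B}(t))\,\expe[\mbf{Y}(t)-\mbf{P}(t)|\mc{F}_t]=\mbf{0}$, since $\bar{W}(t)-\bar{B}(t)$ is $\mc{F}_t$-measurable and sits to the left of a conditionally mean-zero fluctuation.

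The one point that deserves care—and the reason the decomposition is written the way it is—is the cross term $(W(t)-B(t))\mbf{Y}(t)$, which couples the topology randomness with the sampling randomness. If these two sources are statistically dependent given $\mc{F}_t$, then $\expe[(W-B)\mbf{Y}|\mc{F}_t]$ need not factor as $(\bar{W}-\bar{B})\mbf{P}$, so one cannot naively declare this product conditionally mean-zero. The definitions \eqref{eq:perturbdef}--\eqref{eq:MartingaleDiffdef} are engineered precisely to quarantine this potential correlation inside the correction term $\mbf{C}(t)$, which carries the piece $(W-B-\bar{W}+\bar{B})\mbf{Y}$ and is not required to have zero conditional mean.

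Consequently, every product that survives into $\mbf{M}(t)$ has at least one factor that is already $\mc{F}_t$-measurable—either a barred matrix, or $\mbf{Q}(t)$, or $\mbf{P}(t)$—so the conditional expectation always factors cleanly and no independence between topology and sampling is ever invoked. Confirming that the martingale summands genuinely have this ``one measurable factor'' structure is the crux of the argument; once it is in place, the three conditional expectations vanish immediately and the remaining computations in both stages are routine term-matching.
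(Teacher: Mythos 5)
Your proposal is correct and takes essentially the same route as the paper: both isolate the increment $-A(t)\mbf{Q}(t) + (W(t)-B(t))\mbf{Y}(t)$, carry out the same add-and-subtract algebra with the barred matrices (you verify the identity by grouping coefficients of $\mbf{Q}(t)$, $\mbf{Y}(t)$, and $\mbf{P}(t)$, whereas the paper derives the six-term expansion forward), and conclude via $\expe[\mbf{Y}(t)\,|\,\mc{F}_t]=\mbf{P}(t)$ together with the $\mc{F}_t$-measurability of $\mbf{Q}(t)$, $\mbf{P}(t)$, and the barred matrices. Your summand-by-summand martingale check and your observation that the cross term $(W(t)-B(t))\mbf{Y}(t)$ is deliberately quarantined inside $\mbf{C}(t)$ (since $W(t),B(t)$ may depend on $\mbf{Y}(t)$) simply make explicit what the paper's terse closing sentence leaves implicit.
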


\begin{proof}
Rewriting the iterates, we see that
	\begin{align}
	\mbf{Q}(t+1) &= \mbf{Q}(t) + \delta(t) \big[ 
		- A(t) \mbf{Q}(t) 
		\notag \\
		&\hspace{1.2in}
		+ (W(t) - B(t)) \mbf{Y}(t)
		\big],
	\label{eq:iter_expand}
	\end{align}
and the term multiplied by $\delta(t)$ can be expanded:
	\begin{align}
	- A(t) \mbf{Q}(t) + & (W(t) - B(t)) \mbf{Y}(t) & \nonumber \\
		&\hspace{-0.5in}
	= \left( \bar{W}(t) - \bar{B}(t) - \bar{A}(t) \right) \mbf{Q}(t)  
		\nonumber \\
		&\hspace{-0.3in} 
		+ \big( W(t) - B(t) - A(t)  \notag \\
		&\hspace{0.8in} - \bar{W}(t) + \bar{B}(t) + \bar{A}(t) \big) \mbf{Q}(t)
		\nonumber \\ 
		& \hspace{-0.3in}
		+ \left( W(t) - B(t)  \right) \left( \mbf{Y}(t) - \mbf{Q}(t) \right) \nonumber \\
		&\hspace{-0.5in}
	= \left( \bar{W}(t) - \bar{B}(t) - \bar{A}(t) \right) \mbf{Q}(t) \notag \\
		&\hspace{-0.3in} 
		+ \big( W(t) - B(t) - A(t) 
		\notag \\
		&\hspace{0.8in}
		- \bar{W}(t) + \bar{B}(t) + \bar{A}(t) \big) \mbf{Q}(t)
		\nonumber \\ & \hspace{-0.3in}
		+ \left( W(t) - B(t) \right) \left( \mbf{P}(t) - \mbf{Q}(t) \right)
		\nonumber \\ & \hspace{-0.3in}
		+ \left( W(t) - B(t) \right) \left( \mbf{Y}(t) - \mbf{P}(t) \right)
	\nonumber \\
	&\hspace{-0.5in}
	= \left( \bar{W}(t) - \bar{B}(t) - \bar{A}(t) \right) \mbf{Q}(t)
		\nonumber \\ 
		& \hspace{-0.3in}
		+ \big( W(t) - B(t) - A(t) 
		\notag \\
		&\hspace{0.8in}- \bar{W}(t) + \bar{B}(t) + \bar{A}(t) \big) \mbf{Q}(t)
		\nonumber \\ & \hspace{-0.3in}
		+ \left( W(t) - B(t) \right) \left( \mbf{P}(t) - \mbf{Q}(t) \right)
		\nonumber \\ & \hspace{-0.3in}
		+ \left( \bar{W}(t) - \bar{B}(t) \right) \left( \mbf{Y}(t) - \mbf{P}(t) \right)
		\nonumber \\ & \hspace{-0.3in}
		+ \left( \bar{W}(t) - \bar{B}(t) - W(t) + B(t) \right) \mbf{P}(t)
		\nonumber \\ & \hspace{-0.3in}
		+ \left( W(t) - B(t) - \bar{W}(t) +\bar{B}(t) \right) \mbf{Y}(t)
		\big].
	\label{eq:extended}
	\end{align}
Define $\bar{H}$, $\mbf{C}$, and $\mbf{M}(t)$ as in \eqref{eq:hdef}, \eqref{eq:perturbdef} and \eqref{eq:MartingaleDiffdef}.  Furthermore, define
	\begin{align*}
	\mbf{M}(t) &= \big( W(t) - B(t) - A(t)  
		\nonumber \\ & \hspace{0.5in}
			- \bar{W}(t) + \bar{B}(t) + \bar{A}(t) \big) \mbf{Q}(t)
		\nonumber \\ & \hspace{0.3in}
		+ \left( \bar{W}(t) - \bar{B}(t) \right) \left( \mbf{Y}(t) - \mbf{P}(t) \right)
		\nonumber \\ & \hspace{0.3in}
		+ \left( \bar{W}(t) - \bar{B}(t) - W(t) + B(t) \right) \mbf{P}(t).
	\end{align*}
\resp{Taking conditional expectation of both sides of \eqref{eq:iter_expand} and noting that $\expe[ \mbf{Y}(t) | \mc{F}_t ] = \mbf{P}(t)$, we have the result. }
\end{proof}

Loosely speaking, the term $\mbf{C}(t)$ will be asymptotically vanishing if $\mbf{P}(t) \to \mbf{Q}(t)$ and the matrices $W(t)$ and $B(t)$ are asymptotically independent of $\mbf{Y}(t)$.  In the next section, we show that this stochastic approximation scheme converges under certain conditions on update rule.

\subsection{Example Algorithms \label{sec:examples}}

There are many algorithms which have the general form of the update rule \eqref{eq:linearupdate}. 
Before we proceed with the analysis of  \eqref{eq:linearupdate}, however, we look at three examples in which we can see interesting regimes of consensus-like behavior stemming from the update rule. 
In these numerical examples, the graph $\mc{G}(t)$ is a $5 \time 5$ grid, so the maximum degree of any node is $\dmax = 4$.  The initial values in the grid were drawn i.i.d. from the distribution $(0.4,0.3,0.2,0.1)$ on $M = 4$ elements.

\subsubsection{Averaging with social samples}  Suppose $\mbf{P}(t) = \mbf{Q}(t)$ for all $t$ and consider the update
	\begin{align}
	Q_i(t+1) = \frac{\dmax + 1 - d_i}{\dmax + 1} Q_i(t) + \sum_{j \in \mc{N}_i(t)} \frac{1}{\dmax + 1} Y_j(t).
	\label{eq:examp:singleton}
	\end{align}
This corresponds to $\delta(t) = 1$, $A_{ii}(t) = \frac{d_i}{\dmax + 1}$, $B_{ii}(t) = 0$, and $W_{ij}(t) = \frac{1}{\dmax + 1}$.  A trace of a single node's estimates for $M = 4$ is shown in Figure \ref{fig:examp:singleton}. The four lines correspond to the 4 elements of $Q_{i}(t)$.  As shown in \cite{Narayanan11:lang}, this procedure results in all $Q_i(t)$ converging to a consensus value that is a random singleton $\mbf{q}^*$ in $\{\mbf{e}_1,\mbf{e}_2, \ldots, \mbf{e}_M\}$ such that $\expe[\mbf{q}^*] = \Pi$. 
	\begin{figure}
	\centering
	\includegraphics[width=3in]{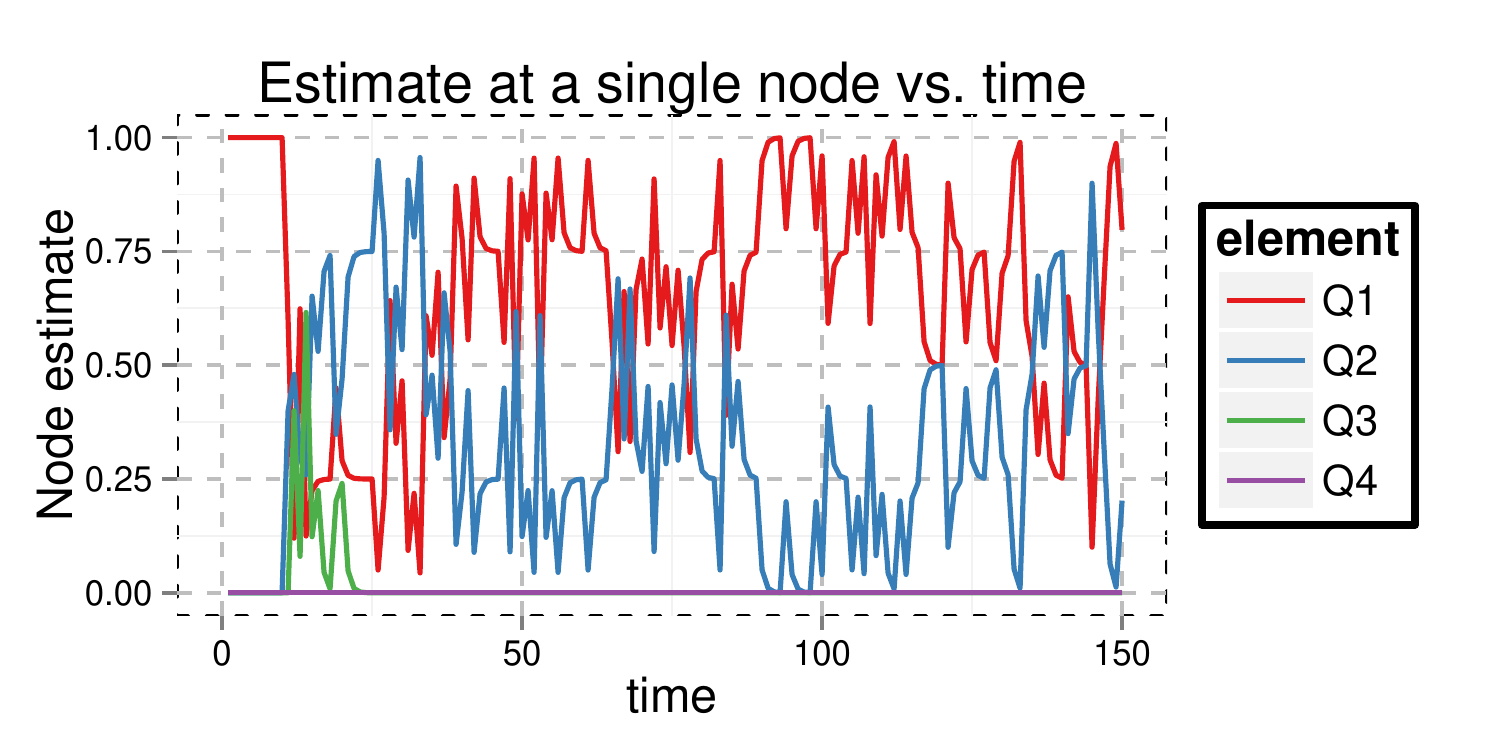}
	\caption{Trace of estimate of $Q_{i}(t)$ for a single node $i$ and $M = 4$, using the algorithm in \eqref{eq:examp:singleton} on a $5 \times 5$ grid graph.  The four lines correspond to the 4 entries of the vector $Q_i(t)$.  The estimates of all nodes converge to a random elementary vector $\mbf{q}^*\in \mc{Y}$; furthermore, $\mathbb{E}[\mbf{q}^*]=\Pi$. \label{fig:examp:singleton}}
	\end{figure}

\subsubsection{Averaging with social samples and decaying step size}  Suppose $\mbf{P}(t) = \mbf{Q}(t)$ for all $t$ and consider the update
	\begin{align}
	Q_i(t+1) &=  Q_i(t) - \frac{d_i}{\dmax + 1} \delta(t) Q_i(t)  \nonumber \\
	&\hspace{0.5in} + \delta(t) \sum_{j \in \mc{N}_i(t)} \frac{1}{\dmax+1} Y_j(t),
	\label{eq:examp:discount}
	\end{align}
with $\delta(t) = 10/(t+1)$.  This corresponds to $A_{ii}(t) = \frac{d_i}{\dmax + 1}$, $B_{ii}(t) = 0$, and $W_{ij}(t) = \frac{1}{\dmax + 1}$.  Figure \ref{fig:examp:discount} shows the estimates of all agents in a $5 \times 5$ grid tracking the estimate of $\Pi(m)$ for a single $m$.  The estimates of the agents converge to a consensus $\mbf{q}^*$ which is not equal to the true value $\Pi$ but $\expe[\mbf{q}^*] = \Pi$.  More generally, we will show that under standard assumptions on the step size and weights, if $\mbf{P}(t) = \mbf{Q}(t)$ then the iteration \eqref{eq:linearupdate} converges to a consensus state whose expectation is $\Pi$.
	\begin{figure}
	\centering
	\includegraphics[width=3in]{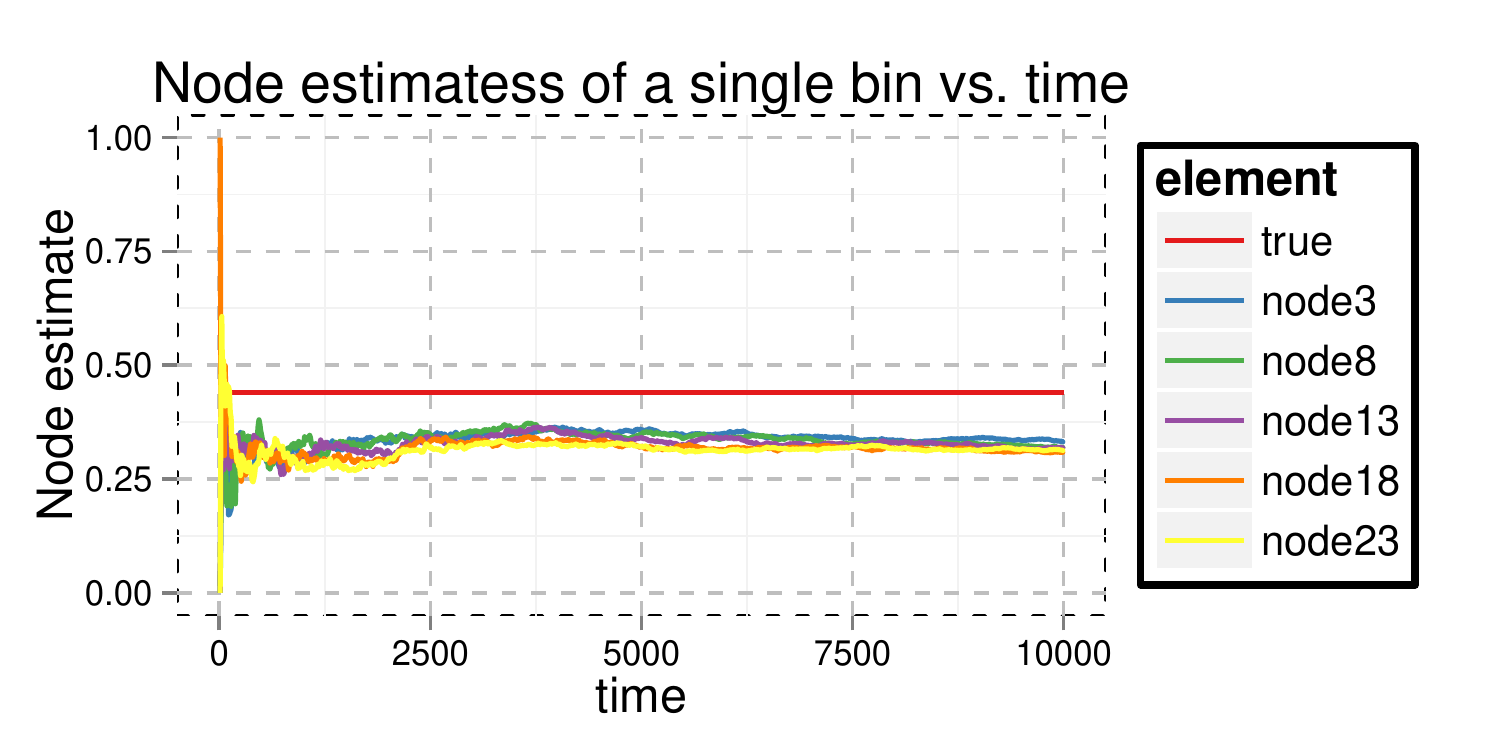}
	\caption{Trace of estimate of $Q_{i,m}(t)$ over all $i\in [n]$ for a single $m \in [M]$ with $M = 4$, using the algorithm in \eqref{eq:examp:discount} on a $5 \times 5$ grid graph.  The estimates of all nodes converge to a common random distribution 
	$\mbf{q}^* \in \mathbb{P}(\mc{Y})$ whose expectation is equal to the true distribution $\Pi$. \label{fig:examp:discount}}
	\end{figure}

\subsubsection{Exchange with social samples and censoring} Let $\delta(t) = 1/t$ and suppose that for each $(i,m)$, $m\neq \mbf{0}$, 
we set 
	\begin{align*}
	P_{i,m}(t) = \left\{ 
		\begin{array}{ll}
		0  & Q_{i,m}(t) < \dmax \delta(t) \\
		Q_{i,m}(t) & \mathrm{otherwise}
		\end{array}
		\right.
	\end{align*}
\resp{That is, under $\mbf{P}(t)$ node $i$ sends random messages corresponding to those elements of $\mbf{Q}(t)$ that are larger than $\dmax \delta(t)$ while it remains silent with probability $P_{i,0}(t) = 1 - \sum_{m \neq \mbf{0}} P_{i,m}(t)$.  Let $N_i(t) = \sum_{j \in \mc{N}_i} \mbf{1}( Y_j(t) \ne \mbf{0} )$ denote the total number of neighbors of node $i$ which did not remain silent at time $t$.  
Node $i$ updates its local estimate of the global histogram according to the following rule: }
	\begin{align}
	Q_i(t+1) = \left\{ 
		\begin{array}{ll}
		Q_i(t) & Y_i(t) = \mbf{0} \\
		Q_i(t) - \delta(t) N_i(t) Y_i(t) \\
		\hspace{0.3in} + \delta(t) \sum_{j \in \mc{N}_i} Y_j(t) & Y_i(t) \ne \mbf{0}
		\end{array}
		\right. .
	\label{eq:examp:censor}
	\end{align}
The behavior of \eqref{eq:examp:censor}, when $\delta(t) = \frac{10}{t+1}$ is illustrated in Figure \ref{fig:examp:censor}.  
The estimates $Q_i(t)$ of all agents converge to $\Pi$ almost surely under this update rule.  More generally, we show that given certain technical conditions, the mean across the agents of the sample paths $\mbf{Q}(t)$ is always equal to $\Pi$ and the estimate of each agent converges almost surely to $\Pi$.   In this case we  show that the rate of convergence is on the order of $1/t$.
	\begin{figure}
	\centering
	\includegraphics[width=3in]{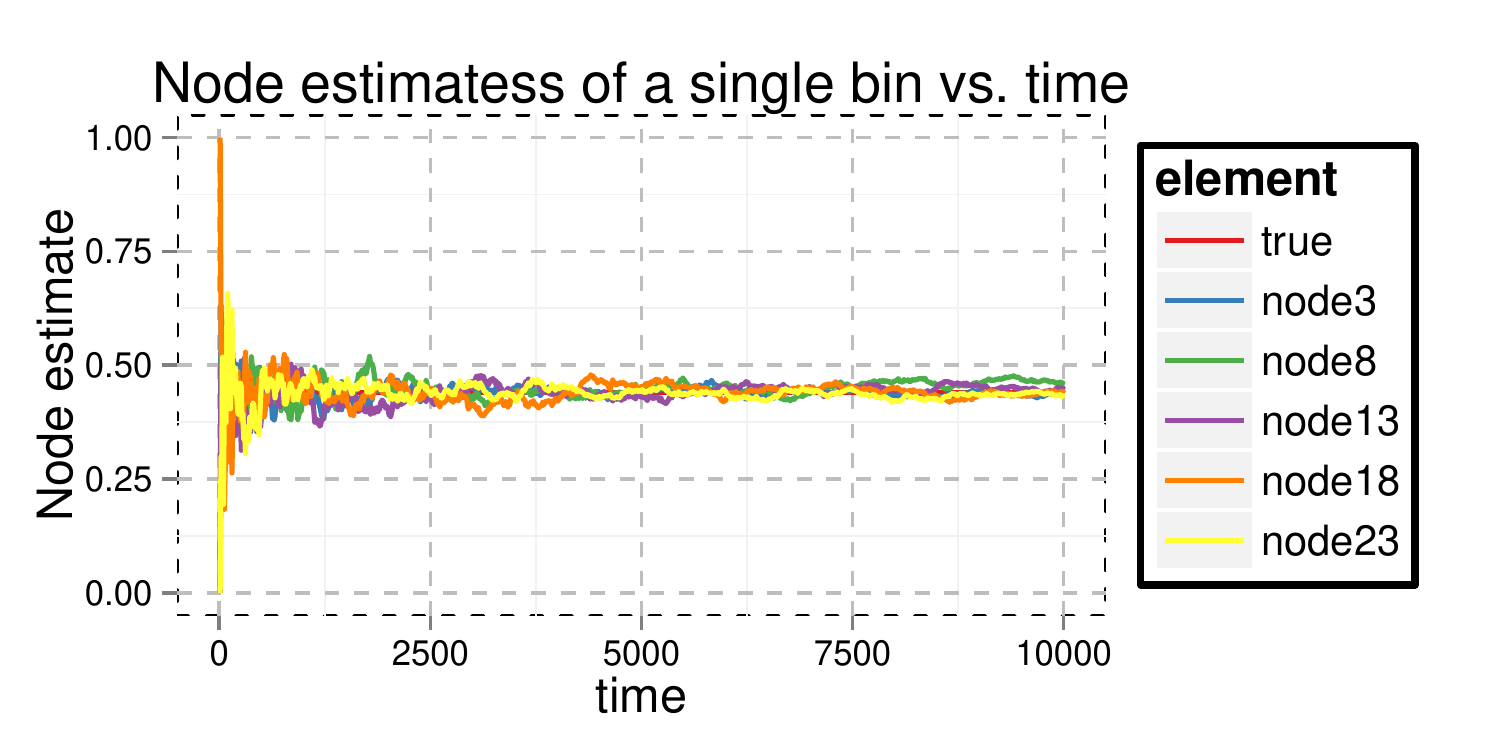}
	\caption{Trace of $Q_{i,m}(t)$ for a few different $i$ and a single $m \in M$ with $M = 4$, using the algorithm in \eqref{eq:examp:censor} on a $5 \times 5$ grid graph.  For this update rule, the estimates of all nodes converge almost surely to $\Pi$. \label{fig:examp:censor}}
	\end{figure}

These examples illustrate that the algorithms can display different qualitative behaviors depending on the choice of the step 
size $\delta(t)$ as well as the choice of $\mathbf{P}$.  In the first case, all agent estimates converge to a common random singleton, whereas under the second scenario they seem to converge to a common estimated histogram, even though this common estimated histogram might be far from the true histogram of the given initial values. 
  Finally, in the case where agents ``censor'' their low values and follow update rule (\ref{eq:examp:censor}), Figure 3 suggests an almost sure convergence to $\Pi$.
In the next section, we analytically confirm these empirical findings in with a unified analysis.

\section{Analysis}

We now turn to the analysis of the general protocol in \eqref{eq:linearupdate}.  We will need a number of additional conditions on the iterates in order to guarantee convergence.  Condition \ref{assume:coefficients} is that the agents compute convex combinations of their current estimates and the messages at time $t$.  This guarantees that the estimated distributions of the agents $Q_i(t)$ are proper probability distributions on $[M]$.

\subsection{Mean preservation}

Let $\mc{F}_t$ be the $\sigma$-algebra generated by 
	\[
	\{ \mbf{Q}_i(s) : s < t \} \cup \{ \mc{G}(s) : s < t \},
	\]
so $\mbf{Q}(t+1)$ is measurable with respect to $\mc{F}_t$.

\begin{assume}[Mixing coefficients]
\label{assume:coefficients}

For all $t > 0$ and all $i \in [n]$,
	\begin{align}
	\sum_{j \in \mc{N}_i} W_{ij}(t) - A_{ii}(t) - B_{ii}(t) = 0.
	\label{eq:assume:convex}
	\end{align}
\end{assume}

Let
	\begin{align*}
	\bar{A}(t) &= \expe[ A(t) | \mc{F}_t ] \\
	\bar{B}(t) &= \expe[ B(t) | \mc{F}_t ] \\
	\bar{W}(t) &= \expe[ W(t) | \mc{F}_t ].
	\end{align*}
Note that the coefficient $W(t)$ and $A(t)$ can, in general, depend on the messages $\mbf{Y}(t)$ as well as the graph $\mc{G}(t)$.

Our first result is a trivial consequence of the linearity of the update rule, and does not require any conditions beyond the fact that the estimate $\mbf{Q}$ is itself a distribution.

\begin{lemma}
\label{lem:meanpreserve}
Suppose Condition \ref{assume:coefficients} holds.  If $\mbf{P}(t) = \mbf{Q}(t)$ and $\mbf{W}(t)$ and $\mbf{B}(t)$ are independent of $\mbf{Y}(t)$, then for all $t$,
	\begin{align*}
	\expe[ \mbf{1}^{\trans} \mbf{Q}(t) ]  = \mbf{1}^{\trans} \mbf{Q}(0) = n \Pi.
	\end{align*} 
\end{lemma}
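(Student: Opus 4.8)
The plan is to prove the identity by induction on $t$, propagating the invariant $\expe[\mbf{1}^{\trans}\mbf{Q}(t)] = n\Pi$ one step at a time via the tower property of conditional expectation. The base case $t = 0$ is deterministic: since $Q_i(0) = \mbf{e}_{X_i}$, summing the rows gives $\mbf{1}^{\trans}\mbf{Q}(0) = \sum_{i=1}^n \mbf{e}_{X_i}$, which equals $n\Pi$ directly from the definition of the empirical histogram, so no expectation is needed at this stage.

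For the inductive step I would work from the compact form \eqref{eq:compactupdate} rather than the raw update \eqref{eq:linearupdate}, since Lemma 1 has already done the algebraic bookkeeping. Conditioning on $\mc{F}_t$ and using that $\mbf{M}(t)$ is a martingale difference, so $\expe[\mbf{M}(t)\mid\mc{F}_t] = \mbf{0}$, the only nontrivial term left is $\mbf{C}(t)$. This is where the two hypotheses enter: the assumption $\mbf{P}(t) = \mbf{Q}(t)$ annihilates the first summand $(W(t)-B(t))(\mbf{P}(t)-\mbf{Q}(t))$ of \eqref{eq:perturbdef} outright, while the independence of $W(t)$ and $B(t)$ from $\mbf{Y}(t)$ lets me factor the conditional expectation of the second summand as $\expe[W(t)-B(t)-\bar{W}(t)+\bar{B}(t)\mid\mc{F}_t]\,\expe[\mbf{Y}(t)\mid\mc{F}_t] = \mbf{0}$, because $\expe[\mbf{Y}(t)\mid\mc{F}_t] = \mbf{P}(t)$. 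Hence $\expe[\mbf{C}(t)\mid\mc{F}_t] = \mbf{0}$ and the conditional mean collapses to $\expe[\mbf{Q}(t+1)\mid\mc{F}_t] = (I + \delta(t)\bar{H}(t))\mbf{Q}(t)$.

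Left-multiplying by $\mbf{1}^{\trans}$ then reduces everything to the single algebraic identity $\mbf{1}^{\trans}\bar{H}(t) = \mbf{0}$, which I expect to be the main obstacle. Condition \ref{assume:coefficients} is exactly the statement that each \emph{row} of $\bar{H}(t) = \bar{W}(t)-\bar{B}(t)-\bar{A}(t)$ sums to zero, i.e.\ $\bar{H}(t)\mbf{1} = \mbf{0}$; this is what keeps each $Q_i(t+1)$ a probability vector, but it is the wrong null-vector direction for conserving the row-sum $\mbf{1}^{\trans}\mbf{Q}(t)$. What is actually needed is the column-sum identity $\sum_{k} \bar{W}_{ki}(t) = \bar{A}_{ii}(t) + \bar{B}_{ii}(t)$ for every $i$. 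The gap between the row- and column-sum versions is closed by the symmetry of the mixing weights on the undirected graph $\mc{G}(t)$ (the weights in all three example algorithms are symmetric, and $A(t), B(t)$ are diagonal hence symmetric): symmetry of $\bar{H}(t)$ gives $\mbf{1}^{\trans}\bar{H}(t) = (\bar{H}(t)^{\trans}\mbf{1})^{\trans} = (\bar{H}(t)\mbf{1})^{\trans} = \mbf{0}$. I would make this symmetry (equivalently, weight-balance of $W(t)$ on the undirected graph) explicit at this point, since it is the one nontrivial ingredient.

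With $\mbf{1}^{\trans}\bar{H}(t) = \mbf{0}$ established, the conditional mean simplifies to $\expe[\mbf{1}^{\trans}\mbf{Q}(t+1)\mid\mc{F}_t] = \mbf{1}^{\trans}\mbf{Q}(t)$; taking a further expectation and applying the inductive hypothesis $\expe[\mbf{1}^{\trans}\mbf{Q}(t)] = n\Pi$ closes the induction. Everything except the row-versus-column-sum point is routine linearity and conditional-expectation manipulation, which is why the result can fairly be described as essentially a consequence of the linear structure of \eqref{eq:linearupdate}.
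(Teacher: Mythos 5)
Your proposal is correct, and at its core it runs the same one-step argument as the paper's proof: condition on $\mc{F}_t$, use $\expe[\mbf{Y}(t)\mid\mc{F}_t]=\mbf{P}(t)=\mbf{Q}(t)$ together with the (conditional) independence of $W(t),B(t)$ from $\mbf{Y}(t)$ to obtain $\expe[\mbf{Q}(t+1)\mid\mc{F}_t]=\bigl(I+\delta(t)(\bar{W}(t)-\bar{B}(t)-\bar{A}(t))\bigr)\mbf{Q}(t)$, then left-multiply by $\mbf{1}^{\trans}$ and telescope via the tower property. The paper does this in two displays directly from \eqref{eq:linearupdate}; your detour through the decomposition \eqref{eq:compactupdate} is equivalent but unnecessary, though your verification that $\expe[\mbf{C}(t)\mid\mc{F}_t]=\mbf{0}$ under the two hypotheses (the first summand of \eqref{eq:perturbdef} killed by $\mbf{P}(t)=\mbf{Q}(t)$, the second by independence) is exactly right, as is your base case $\mbf{1}^{\trans}\mbf{Q}(0)=\sum_i \mbf{e}_{X_i}=n\Pi$.

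The genuinely valuable part of your write-up is the row-versus-column-sum observation, and you are right about it: this is a step the paper's proof elides. Condition \ref{assume:coefficients} states $\sum_{j\in\mc{N}_i}W_{ij}(t)=A_{ii}(t)+B_{ii}(t)$, i.e.\ $\bar{H}(t)\mbf{1}=\mbf{0}$, which is what keeps each $Q_i(t+1)$ a probability vector; conservation of the column totals $\mbf{1}^{\trans}\mbf{Q}(t)$ instead needs $\mbf{1}^{\trans}\bar{H}(t)=\mbf{0}$, i.e.\ $\sum_k \bar{W}_{ki}(t)=\bar{A}_{ii}(t)+\bar{B}_{ii}(t)$ for every $i$. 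The paper passes from its conditional-expectation display to \eqref{martingale} with ``and therefore,'' silently invoking the column-sum identity, which the stated hypotheses do not literally supply: on a two-node graph with edge $(1,2)$, take $W_{12}=1$, $W_{21}=0$, $B=0$, $A_{11}=1$, $A_{22}=0$; Condition \ref{assume:coefficients} holds, yet $\mbf{1}^{\trans}\expe[\mbf{Q}(t+1)\mid\mc{F}_t]=\mbf{1}^{\trans}\mbf{Q}(t)+\delta(t)\bigl(Q_2(t)-Q_1(t)\bigr)$, which is not conserved. Your repair --- symmetry (or column-balance) of the expected weights, which holds in all three example algorithms since $W_{ij}=1/(\dmax+1)$ on an undirected graph with $A(t),B(t)$ diagonal, and which is consistent with the symmetric $\bar{H}$ of Condition \ref{assume:dynlimit} --- is the right one, and making it explicit, as you propose, is an improvement on the paper's two-line proof rather than a deviation from it.
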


\begin{proof}
Given Condition \ref{assume:coefficients},
	\begin{align*}
	\expe[ \mbf{Q}(t+1) | \mc{F}_t] 
		&= (I - \delta(t) \bar{A}(t) ) \mbf{Q}(t) - \delta(t) \bar{B}(t) \mbf{Q}(t)  \nonumber \\
	&\hspace{1in} + \delta(t) \bar{W}(t) \mbf{Q}(t).
	\end{align*}
And therefore
	\begin{align} \label{martingale}
	\expe[\mbf{1}^{\trans} \mbf{Q}(t+1) | \mc{F}_t]  = \mbf{1}^{\trans} \mbf{Q}(t).
	\end{align}
On the other hand, since $\mbf{1}^{\trans} \mbf{Q}(0) = n \Pi$, the proof is complete. 
\end{proof}

This result is simple to see -- if the expected message $Y_i(t)$ is equal to $Q_i(t)$, then the mean of the dynamics are just those of average consensus.  However, it is not necessarily the case that the nodes converge to a consensus state, and if they do converge to a consensus state, that state may not be equal to $\Pi$ on almost every sample path.  The expected average of the node estimates will be equal to $\Pi$, and if they do reach a consensus the expected consensus value will be $\Pi$.  In this latter case it is sometimes possible to characterize the consensus value more explicitly.  For example, Narayanan and Niyogi \cite{Narayanan11:lang} show that in one version of 
this update rule, for all $i \in [n]$, $Q_i(t) \to \mbf{q}^* \in \mc{Y}-\{\mbf{0}\}$ and $\expe[ \mbf{q}^*] = \Pi$.  

\begin{lemma}[Singleton convergence \cite{Narayanan11:lang}]
\label{lem:singleton}
Suppose Condition \ref{assume:coefficients} holds.  If $\mbf{P}(t) = \mbf{Q}(t)$ and $W(t)=W$ and $B(t)=B$ are independent of time $t$ and (random) social samples $\mbf{Y}(t)$, then
	\begin{align*}
	\mathbb{P} \left\{\lim_{t\rightarrow \infty} \mbf{Q}(t)  = \mbf{1} \mbf{q}^*, \mbf{q}^* \in \mc{Y}-\{\mbf{0}\}  \right\}=1 .
	\end{align*} 
\end{lemma}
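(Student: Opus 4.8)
The plan is to reduce the randomized dynamics to a linear recursion driven by a single fixed stochastic matrix, and then combine martingale convergence of the network average with a quadratic-variation estimate that pins each node's estimate to the set of singletons. Since $W(t)=W$ and $B(t)=B$ are deterministic, Condition~\ref{assume:coefficients} forces $A(t)=A$ deterministic as well, and taking conditional expectations in \eqref{eq:linearupdate} with $\expe[\mbf Y(t)\mid\mc F_t]=\mbf P(t)=\mbf Q(t)$ gives $\expe[\mbf Q(t+1)\mid\mc F_t]=\tilde W\mbf Q(t)$ for the fixed matrix $\tilde W\defeq I-\delta(A+B-W)$. Condition~\ref{assume:coefficients} gives $\tilde W\mbf 1=\mbf 1$, while the column-sum relation underlying Lemma~\ref{lem:meanpreserve} gives $\mbf 1^\trans\tilde W=\mbf 1^\trans$; on a connected $\mc G$ with positive self-weights $\tilde W$ is thus doubly stochastic and primitive. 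Writing $\mbf N(t)\defeq\mbf Q(t+1)-\tilde W\mbf Q(t)$, the iteration reads $\mbf Q(t+1)=\tilde W\mbf Q(t)+\mbf N(t)$, where $\mbf N(t)$ is a martingale difference whose conditional covariance comes solely from the conditionally independent Bernoulli samples $Y_{j,m}(t)$ with $\prob(Y_{j,m}(t)=1\mid\mc F_t)=Q_{j,m}(t)$; its magnitude is governed by the within-node spread $S(t)\defeq\sum_{j,m}Q_{j,m}(t)(1-Q_{j,m}(t))$, which is zero exactly when every $Q_j(t)$ is a singleton.

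First I would show that the network average $z(t)\defeq\frac1n\mbf 1^\trans\mbf Q(t)$ converges. By Lemma~\ref{lem:meanpreserve} it is a martingale, and it is bounded since each coordinate lies in $[0,1]$, so it converges almost surely to a random probability vector $\mbf q^*$. Next, because $z(t)$ is $L^2$-bounded, the increasing part in the Doob decomposition of $\norm{z(t)}^2$ converges, giving $\sum_t\expe[\norm{z(t+1)-z(t)}^2\mid\mc F_t]<\infty$ a.s. Evaluating this conditional variance from the independent Bernoulli samples yields $\expe[\norm{z(t+1)-z(t)}^2\mid\mc F_t]=\sum_{j,m}a_j^2\,Q_{j,m}(t)(1-Q_{j,m}(t))$, where $a_j$ is $\frac{\delta}{n}$ times the $j$-th column sum of $W-B$ and is bounded below by a positive constant on a connected graph. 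Hence $\sum_t S(t)<\infty$ a.s., so $Q_{j,m}(t)(1-Q_{j,m}(t))\to0$ for every $(j,m)$ and each node's estimate approaches the singleton set $\{\mbf e_1,\dots,\mbf e_M\}=\mc Y-\{\mbf 0\}$.

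The remaining step is to promote ``each node is near a singleton'' to ``all nodes agree on a common singleton.'' Let $J\defeq\frac1n\mbf 1\mbf 1^\trans$ and $\mbf D(t)\defeq(I-J)\mbf Q(t)$. Using $\tilde W\mbf 1=\mbf 1$ and $\mbf 1^\trans\tilde W=\mbf 1^\trans$ one checks $(I-J)\tilde W=\tilde W(I-J)$, so $\mbf D(t+1)=\tilde W\mbf D(t)+(I-J)\mbf N(t)$, where on the disagreement subspace $\{x:\mbf 1^\trans x=0\}$ the matrix $\tilde W$ has spectral radius $\rho<1$ by primitivity. The injected noise satisfies $\expe[\norm{(I-J)\mbf N(t)}^2\mid\mc F_t]\le C\,S(t)$, which is summable by the previous step, so a Robbins--Siegmund supermartingale argument yields $\norm{\mbf D(t)}\to0$ a.s., i.e. $Q_j(t)-z(t)\to0$ for all $j$. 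Together with $z(t)\to\mbf q^*$ and the fact that each $Q_j(t)$ approaches the singleton set, this forces $Q_j(t)\to\mbf q^*$ for every $j$ with $\mbf q^*\in\{\mbf e_1,\dots,\mbf e_M\}$, which is the assertion $\mbf Q(t)\to\mbf 1\mbf q^*$ with $\mbf q^*\in\mc Y-\{\mbf 0\}$.

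I expect the consensus step to be the main obstacle. The two halves are coupled: the spectral contraction of $\tilde W$ on the disagreement subspace only helps once the sampling noise is small, while summability of that noise is precisely what the quadratic-variation estimate supplies, so the purity and consensus arguments must be run jointly rather than separately. A secondary technical point is that $\tilde W$ need not be symmetric, so $\norm{\tilde W(I-J)}$ can exceed $\rho$ for a single step; making the supermartingale contraction rigorous then requires a norm adapted to $\tilde W$ (or grouping iterations into blocks), together with a careful verification of the column-sum/double-stochasticity condition that Lemma~\ref{lem:meanpreserve} tacitly uses.
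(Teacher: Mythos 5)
The paper contains no proof of this lemma to compare against: it is imported wholesale from Narayanan and Niyogi \cite{Narayanan11:lang}, with only the surrounding remark that under this update rule $Q_i(t) \to \mbf{q}^* \in \mc{Y}-\{\mbf{0}\}$ with $\expe[\mbf{q}^*]=\Pi$. Your argument is therefore a self-contained alternative, and its skeleton is sound. With $W$, $B$, and (implicitly) $\delta$ constant and $\mbf{P}(t)=\mbf{Q}(t)$, the iteration is $\mbf{Q}(t+1)=\tilde{W}\mbf{Q}(t)+\mbf{N}(t)$ with $\tilde{W}=I-\delta(A+B-W)$ row-stochastic by Condition \ref{assume:coefficients}; the average $z(t)=\frac{1}{n}\mbf{1}^{\trans}\mbf{Q}(t)$ is a bounded martingale, so it converges a.s.\ and its predictable quadratic variation is a.s.\ finite; your coordinatewise variance computation is exact (within-node covariances across the $M$ coordinates never enter, since the squared norm splits over coordinates and the samples are conditionally independent across nodes), so $\sum_t S(t)<\infty$ whenever the column sums $a_j$ are bounded away from zero; and the Robbins--Siegmund step on $(I-J)\mbf{Q}(t)$ closes consensus, after which $z(t)\to\mbf{q}^*$ together with $S(t)\to 0$ pins $\mbf{q}^*$ to the finite set $\{\mbf{e}_1,\dots,\mbf{e}_M\}$. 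Two of your own worries can be retired: the purity and consensus halves need not be run jointly --- summability of $S(t)$ uses only the average martingale, so the proof is sequential exactly as written --- and the adapted-norm issue dissolves because the column-sum identity $\mbf{1}^{\trans}\tilde{W}=\mbf{1}^{\trans}$, which you correctly note Lemma \ref{lem:meanpreserve} tacitly requires beyond the row-sum Condition \ref{assume:coefficients}, holds here via symmetry of $W$ on an undirected graph, and that same symmetry makes $\tilde{W}$ symmetric, giving $\norm{\tilde{W}(I-J)}=\rho<1$ directly. One substantive observation your proof surfaces: the positivity $a_j>0$, i.e.\ $\sum_i W_{ij}-B_{jj}>0$, is not cosmetic. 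For the mass-exchange choice $B_{jj}=\sum_i W_{ij}$, $A=0$, one has $\mbf{1}^{\trans}\mbf{Q}(t)\equiv n\Pi$ pathwise, so singleton consensus is impossible unless $\Pi$ is itself degenerate; the lemma as stated, with an arbitrary fixed $B$, is thus literally false without such a condition (along with connectivity, nonnegativity of $\tilde{W}$ for $\delta$ small enough, and constancy of $\delta$), and your derivation makes explicit the hypotheses the paper leaves buried in the citation. Compared with deferring to \cite{Narayanan11:lang}, your route buys an explicit mechanism --- finite quadratic variation of the conserved-in-mean average forcing the Bernoulli spread $S(t)$ to be summable --- which both identifies the absorbing states and localizes exactly where non-degenerate sampling noise is needed.
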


\subsection{Almost sure convergence}

The main result of this paper is obtaining sufficient conditions under which the update rule in \eqref{eq:linearupdate} converges to a state in which all agents have the same estimate of the histogram $\Pi$.  In general, the limiting state need not equal $\Pi$, but in some cases the process does converge almost surely to $\Pi$.  To show almost sure convergence we will need some additional conditions.  Condition \ref{assume:decr_stepsize} is a standard assumption from stochastic approximation on the step size used in the iteration.  A typical choice of step size is $\delta(t) = \Theta(1/t)$ which we used in the examples earlier.

\begin{assume}[Decreasing step size]
\label{assume:decr_stepsize}
The sequence $\delta(t) \to 0$ with $\delta(t) \ge 0$ satisfies
	\begin{align*}
	\sum_{t = 1}^{\infty} \delta(t) = \infty  \qquad \textrm{and} \qquad
	\sum_{t=1}^{\infty} \delta(t)^2 < \infty.
	\end{align*}
\end{assume}

\resp{ Condition \ref{assume:dynlimit} states that the expected weight matrices $\bar{H}(t)$ at each time are perturbations around a fixed time-invariant contraction matrix $\bar{H}$.  
This condition is satisfied in all three examples of interest above. Furthermore, it allows us to simplify the analysis. Note  that 
it seems to us that this condition is rather technical and can be relaxed at the cost of more cumbersome notation and complicated analysis. 
As a result, relaxing this assumption remains an area of future work. }

\begin{assume}[Limiting dynamics]
\label{assume:dynlimit}
There exists a symmetric matrix $\bar{H}$ such that
	\[
	\bar{H}(t) = \bar{H} + \tilde{H}(t),
	\]
where $\tilde{H}_{ij}(t) = O(\delta(t))$.  Furthermore, if $\lambda$ is an eigenvalue of $\bar{H}$ then we have $|\lambda| < 1$ and in particular $\bar{H} \mbf{1} = \mbf{0}$.  That is, $\bar{H}$ is a contraction.
\end{assume}

Condition \ref{assume:social_limit} implies that the perturbation term $\mbf{C}(t)$ in \eqref{eq:compactupdate} vanishes as the step size decreases.  This condition guarantees that the mean dynamics given by $\bar{H}$ govern the convergence to the final consensus state.

\begin{assume}[Bounded perturbation]
\label{assume:social_limit}
We have
	\[
	\norm{\expe[ \mbf{C}(t) | \mc{F}_t ] } = O(\delta(t)).
	\]
\end{assume}

Given these three conditions and the conditions on the coefficients we can show that the agent estimates in \eqref{eq:linearupdate} converge almost surely to a random common consensus state whose expected value, by Lemma \ref{lem:meanpreserve}, is equal to $\Pi$.  Thus for almost every sample path of the update rule, the estimates converge to a common value, but that value may differ across sample paths.  The expectation of the random consensus state is the true average.

\begin{theorem}
\label{thm:asConv}
Suppose Conditions \ref{assume:coefficients}, \ref{assume:decr_stepsize}, \ref{assume:dynlimit}, and \ref{assume:social_limit}  hold.  Then the estimate of each node $i$ governed by the update rule \eqref{eq:linearupdate} converges almost surely to a random variable 
$\mbf{q}^* \in \mathbb{P}(\mc{Y})$ which is a consensus state.  That is, $Q(t) \rightarrow \mbf{Q}^* = \mbf{1} \mbf{q}^*$ where
 $\expe[\mbf{Q}^*] = \mbf{1} \Pi$.
\end{theorem}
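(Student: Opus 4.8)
The plan is to treat \eqref{eq:compactupdate} as a stochastic approximation recursion and to split the $n \times M$ iterate into its consensus component and its disagreement component, analyzing each separately. Let $J = I - \frac{1}{n}\mbf{1}\mbf{1}^{\trans}$ denote the orthogonal projection onto $\mbf{1}^{\perp}$, and write $\mbf{Q}(t) = \frac{1}{n}\mbf{1}(\mbf{1}^{\trans}\mbf{Q}(t)) + \mbf{Q}^{\perp}(t)$ with $\mbf{Q}^{\perp}(t) = J\mbf{Q}(t)$. Two structural facts drive everything. First, since Condition \ref{assume:coefficients} forces the row sums of $W(t)-B(t)-A(t)$ to vanish, we have $\bar{H}(t)\mbf{1} = \mbf{0}$, and since $\bar{H}$ is symmetric (Condition \ref{assume:dynlimit}) also $\mbf{1}^{\trans}\bar{H} = \mbf{0}$. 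Second, because the rows of $\mbf{Q}(t)$ are probability vectors (again Condition \ref{assume:coefficients}) and $\mbf{Y}(t)$ lives in the finite set $\mc{Y}$, every term appearing in \eqref{eq:compactupdate} is uniformly bounded; in particular the martingale difference $\mbf{M}(t)$ has $\expe[\norm{\mbf{M}(t)}^2\mid\mc{F}_t]$ bounded by a constant.

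First I would establish convergence of the consensus component. Left-multiplying \eqref{eq:compactupdate} by $\mbf{1}^{\trans}$ and using $\mbf{1}^{\trans}\bar{H}=\mbf{0}$ leaves only $\mbf{1}^{\trans}\tilde{H}(t)\mbf{Q}(t)$, which is $O(\delta(t))$ by Condition \ref{assume:dynlimit}, the perturbation $\mbf{1}^{\trans}\mbf{C}(t)$, whose conditional mean is $O(\delta(t))$ by Condition \ref{assume:social_limit}, and the martingale term $\mbf{1}^{\trans}\mbf{M}(t)$. Hence $\expe[\mbf{1}^{\trans}\mbf{Q}(t+1)\mid\mc{F}_t] = \mbf{1}^{\trans}\mbf{Q}(t) + O(\delta(t)^2)$, so $\mbf{1}^{\trans}\mbf{Q}(t)$ is a bounded quasi-martingale. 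By the Robbins--Siegmund theorem and Condition \ref{assume:decr_stepsize} (in particular $\sum\delta(t)^2<\infty$), $\mbf{1}^{\trans}\mbf{Q}(t)$ converges almost surely to a random limit $n\mbf{q}^{*}$.

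The heart of the argument, and the step I expect to be the main obstacle, is showing that the disagreement $\mbf{Q}^{\perp}(t)$ vanishes. I would use the Lyapunov function $V(t) = \norm{\mbf{Q}^{\perp}(t)}^2 = \tr(\mbf{Q}(t)^{\trans}J\mbf{Q}(t))$. Expanding $\expe[V(t+1)\mid\mc{F}_t]$ using \eqref{eq:compactupdate} together with $J\bar{H}=\bar{H}J=\bar{H}$ (valid since $\bar{H}$ is symmetric and annihilates $\mbf{1}$), the leading drift is $2\delta(t)\tr(\mbf{Q}^{\perp}(t)^{\trans}\bar{H}\mbf{Q}^{\perp}(t))$. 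Because $\bar{H}$ restricted to $\mbf{1}^{\perp}$ is negative definite with a spectral gap $-\gamma$, $\gamma>0$ (this is the content of the contraction assumption in Condition \ref{assume:dynlimit}), this term is bounded above by $-2\gamma\delta(t)V(t)$. The remaining contributions, namely the cross and quadratic terms involving $\tilde{H}(t)=O(\delta(t))$, the conditionally $O(\delta(t))$ perturbation $\mbf{C}(t)$, and the bounded-variance martingale noise $\mbf{M}(t)$, are all $O(\delta(t)^2)$. The delicate points are verifying that the $\tilde{H}(t)$ perturbation cannot overwhelm the spectral gap (it cannot, since $\delta(t)\to 0$ forces the effective drift coefficient below $-\gamma\delta(t)$ for all large $t$) and correctly handling the $\tr(J\mbf{M}(t)(\cdots))$ cross terms, whose conditional expectation vanishes and leave only the $O(\delta(t)^2)$ second-moment contribution. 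This yields $\expe[V(t+1)\mid\mc{F}_t] \le (1-\gamma\delta(t))V(t) + O(\delta(t)^2)$, and a second application of the Robbins--Siegmund theorem, now exploiting $\sum\delta(t)=\infty$ to force the limit to zero and $\sum\delta(t)^2<\infty$ to control the noise, gives $V(t)\to 0$ almost surely.

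Finally I would assemble the pieces. From $\mbf{Q}^{\perp}(t)\to\mbf{0}$ and $\mbf{1}^{\trans}\mbf{Q}(t)\to n\mbf{q}^{*}$ almost surely we obtain $\mbf{Q}(t) = \frac{1}{n}\mbf{1}(\mbf{1}^{\trans}\mbf{Q}(t)) + \mbf{Q}^{\perp}(t) \to \mbf{1}\mbf{q}^{*}$, so every node converges to the common consensus state $\mbf{q}^{*}$, which lies in $\mathbb{P}(\mc{Y})$ as a bounded limit of probability vectors. For the expectation claim, boundedness of $\mbf{Q}(t)$ and dominated convergence give $\expe[\mbf{1}^{\trans}\mbf{Q}(t)]\to n\expe[\mbf{q}^{*}]$; combined with the mean-preservation identity $\expe[\mbf{1}^{\trans}\mbf{Q}(t)] = n\Pi$ supplied by Lemma \ref{lem:meanpreserve}, this yields $\expe[\mbf{q}^{*}]=\Pi$ and hence $\expe[\mbf{Q}^{*}]=\mbf{1}\Pi$.
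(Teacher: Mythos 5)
Your proof is correct, but it takes a genuinely different route from the paper's. The paper proves the theorem by invoking a black-box stochastic approximation result (Theorem 5.2.1 of Kushner and Yin) and verifying its hypotheses one by one---non-summable but square-summable step sizes, boundedness of the iterates, continuity of the mean map $\bar{H}$, and summability of the perturbation $\tilde{H}(t)\mbf{Q}(t) + \expe[\mbf{C}(t)\mid\mc{F}_t]$---and then concludes that the limit set of the ODE driven by $\bar{H}$ consists of consensus states because $\bar{H}$ is a contraction with a single eigenvector at $\mbf{1}$. You instead give a self-contained two-part argument: decompose via $J = I - \frac{1}{n}\mbf{1}\mbf{1}^{\trans}$, show $\mbf{1}^{\trans}\mbf{Q}(t)$ converges a.s.\ as a bounded process with summable conditional drift plus an $L^2$-bounded martingale, and kill the disagreement with the Lyapunov function $V(t) = \norm{J\mbf{Q}(t)}^2$ and Robbins--Siegmund. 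What your route buys: it is elementary (no appeal to the ODE method), it makes explicit exactly where the spectral gap of $\bar{H}$ on $\mbf{1}^{\perp}$ enters, and the drift inequality $\expe[V(t+1)\mid\mc{F}_t] \le (1 - 2\gamma\delta(t))V(t) + O(\delta(t)^2)$ already contains the $O(\delta(t))$ mean-squared rate that the paper proves separately as Theorem \ref{thm:rate} via Benveniste et al., so you get two of the paper's results from one computation. What the paper's route buys: less calculation and robustness to situations where one only knows the limit set of the mean dynamics rather than a spectral gap. Two caveats, neither a gap specific to you since the paper's proof shares them: first, Condition \ref{assume:dynlimit} as literally stated ($|\lambda| < 1$, $\bar{H}\mbf{1} = \mbf{0}$) does not by itself force the nonzero eigenvalues of $\bar{H}$ to be negative nor the null space to be exactly $\mathrm{span}(\mbf{1})$; your reading (negative definite on $\mbf{1}^{\perp}$ with gap $\gamma > 0$) is the interpretation the paper's own proof also relies on when it says ``contraction with a single eigenvector at $\mbf{1}$.'' Second, the expectation claim $\expe[\mbf{Q}^*] = \mbf{1}\Pi$ rests on Lemma \ref{lem:meanpreserve}, whose hypotheses ($\mbf{P}(t) = \mbf{Q}(t)$ and $W, B$ independent of $\mbf{Y}(t)$) are not among the theorem's stated conditions---under Condition \ref{assume:social_limit} alone the mean of $\mbf{1}^{\trans}\mbf{Q}(t)$ may drift by a summable but nonzero amount---and the paper makes exactly the same implicit appeal, so you are no worse off, but it is worth flagging that both arguments prove the expectation identity only under the extra hypotheses of that lemma.
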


\begin{proof}
The result follows from a general convergence theorem for stochastic approximation algorithms~\cite[Theorem 5.2.1]{KushnerYin:2010}.  Define
	\[
	\mbf{V}(t) =  \bar{H}(t) \mbf{Q}(t) + \mbf{C}(t) + \mbf{M}(t).
	\]
Several additional conditions are needed to guarantee properties of $\mbf{V}(t)$ which ensure convergence of the update in \eqref{eq:compactupdate}.  Condition \ref{assume:decr_stepsize} guarantees that the step sizes decay slowly enough to take advantage of the almost sure convergence of stochastic approximation procedures.  The limit point is a fixed point of the matrix map $\bar{H}$ and Conditions \ref{assume:coefficients} and \ref{assume:dynlimit} show that this map is a contraction so the limit points are consensus states.  The final condition is that the noise in the updates can be ``averaged out.''  This follows in part because the process is bounded, and in part because Condition \ref{assume:social_limit} shows that the perturbation must be decaying sufficiently fast.

We must verify a number of conditions~\cite[p.126]{KushnerYin:2010} to use this theorem.
	\begin{enumerate}
	\item Condition \ref{assume:decr_stepsize} shows that the step sizes not summable but are square summable~\cite[(5.1.1) and (A2.4)]{KushnerYin:2010}.
	\item The iterates are bounded in the sense that $\sup_{t} \expe[ \norm{\mbf{V}(t)}^2 ] < \infty$. 
		 This follows because $Q_i(t)$ is a probability distribution for all $t$, so the updates 
		 must also be bounded~\cite[(A2.1)]{KushnerYin:2010}.
	\item If we take the expected update
	\begin{align*}
	\expe[ \mbf{V}(t) | \mc{F}_t ] 
		&= \bar{H}(t) \mbf{Q}(t) + \expe[ \mbf{C}(t) | \mc{F}_t ] \\
		&= \bar{H} \mbf{Q}(t) + \tilde{H}(t) \mbf{Q}(t) + \expe[ \mbf{C}(t) | \mc{F}_t ],
	\end{align*}
so we can write this conditional expectation as the sum of a measurable function $\bar{H} \mbf{Q}(t)$ and a random yet diminishing perturbation $\tilde{H}(t)  \mbf{Q}(t) + \expe[ \mbf{C}(t) | \mc{F}_t ]$.  Furthermore, from Condition \ref{assume:dynlimit}, the map $\bar{H}$ is continuous~\cite[(A2.2)-(A2.3)]{KushnerYin:2010}.
	\item The final thing to check~\cite[(A2.5)]{KushnerYin:2010} is that the random perturbation in the expected update decays sufficiently quickly:
	\begin{align*}
	\sum_{t=1}^{\infty} \delta(t) \norm{ \tilde{H}(t)  \mbf{Q}(t) + \expe[ \mbf{C}(t) | \mc{F}_t ] }
	&\\
	&\hspace{-1.9in} \le \sum_{t=1}^{\infty} \delta(t) \norm{ \tilde{H}(t) } \norm{ \mbf{Q}(t)} 
		 + \sum_{t=1}^{\infty} \delta(t) \norm{\expe[ \mbf{C}(t) | \mc{F}_t ] } \\
	&\hspace{-1.9in} < \infty.
	\end{align*}
The last step follows from Conditions \ref{assume:decr_stepsize} and \ref{assume:social_limit} \resp{as well as boundedness of 
$ \mbf{Q}(t)$}.	\end{enumerate}
Applying Theorem 5.2.1 of Kushner and Yin \cite{KushnerYin:2010} shows that the estimates converge to a limit set of the linear map $\bar{H}$.  Furthermore, from Condition \ref{assume:dynlimit} we know $\bar{H}$ is a contraction with a single eigenvector at $\mbf{1}$. In other words, the limit points are of the form $\mbf{Q}^{\ast} = \mbf{1} \mbf{q}^{\ast}$ where every row is identical.
\end{proof}

The preceding theorem shows that the updates converge almost surely to a limit when the step sizes are decreasing, even though as \resp{shown in \cite{Narayanan11:lang}}, we know that decreasing step size is not necessary for almost sure convergence.  So far the algorithm has no provable advantage to that of \cite{Narayanan11:lang}, in that each node's estimate converges to a 
consensus state  $\mbf{q}^{\ast}$, but $\mbf{q}^{\ast}$ need not equal $\Pi$.  However, by ensuring that the sample path of the algorithm is ``mean preserving''  (the sum of the $j$-th components of all $Q_i(t)$'s is equal to $\Pi_j$), this consensus limit becomes equal to $\Pi$.  

\begin{assume}[Mean preservation]
\label{assume:mean}
The average of the node estimates is $\Pi$
	\[
	\mbf{1}^{\trans} \mbf{Q}(t) = \Pi \qquad \forall t.
	\]
\end{assume}

\begin{corollary}
\label{cor:censor}
Suppose Conditions \ref{assume:coefficients}, \ref{assume:decr_stepsize}, \ref{assume:dynlimit}, \ref{assume:social_limit}, and \ref{assume:mean} hold. Then $ \mbf{Q}(t) \to \mbf{Q}^{\ast}$ almost surely, where $\mbf{Q}^{\ast} = \mbf{1} \Pi$ almost surely.
\end{corollary}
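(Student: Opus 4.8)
The plan is to bootstrap directly from Theorem \ref{thm:asConv} and to invoke the new hypothesis, Condition \ref{assume:mean}, only to pin down the otherwise-random consensus value. Conditions \ref{assume:coefficients}, \ref{assume:decr_stepsize}, \ref{assume:dynlimit}, and \ref{assume:social_limit} are exactly the hypotheses of Theorem \ref{thm:asConv}, so that theorem applies verbatim: along almost every sample path the iterates converge, $\mbf{Q}(t) \to \mbf{Q}^{\ast} = \mbf{1}\mbf{q}^{\ast}$, to a consensus state in which every row equals some (a priori random) row vector $\mbf{q}^{\ast} \in \mathbb{P}(\mc{Y})$. Thus the only remaining work is to show that Condition \ref{assume:mean} forces $\mbf{q}^{\ast}$ to take a single deterministic value on almost every path.

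To that end, I would exploit the fact that Condition \ref{assume:mean} is a pathwise invariant, holding identically in $t$ on every sample path rather than merely in expectation as in Lemma \ref{lem:meanpreserve}. The linear functional $\mbf{Q} \mapsto \mbf{1}^{\trans}\mbf{Q}$ is continuous, so applying it to the almost sure limit gives $\mbf{1}^{\trans}\mbf{Q}(t) \to \mbf{1}^{\trans}\mbf{Q}^{\ast}$. But by Condition \ref{assume:mean} the left-hand side is a constant sequence, so its limit equals that same constant. Substituting the consensus structure $\mbf{Q}^{\ast} = \mbf{1}\mbf{q}^{\ast}$ and using $\mbf{1}^{\trans}\mbf{1} = n$ then yields a single linear equation that determines $\mbf{q}^{\ast}$ uniquely (up to the normalization implicit in Condition \ref{assume:mean}), giving $\mbf{Q}^{\ast} = \mbf{1}\Pi$ with probability one.

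The essential point — and the only place where this corollary genuinely improves on Theorem \ref{thm:asConv} — is that Condition \ref{assume:mean} is a sample-path statement, whereas the mean-preservation supplied by Lemma \ref{lem:meanpreserve} holds only in expectation. Without the pathwise version one could conclude at best that $\expe[\mbf{Q}^{\ast}] = \mbf{1}\Pi$, which Theorem \ref{thm:asConv} already gives, leaving $\mbf{q}^{\ast}$ free to fluctuate across paths as in the singleton regime of Lemma \ref{lem:singleton}. I therefore expect the substance of the argument to lie entirely in recognizing that the conserved quantity is deterministic and commutes with the limit; the remaining algebra is routine. The genuine difficulty is upstream of this proof, in exhibiting an update rule — such as the censoring scheme of \eqref{eq:examp:censor} — that simultaneously meets the contraction requirement of Condition \ref{assume:dynlimit}, the decay of Condition \ref{assume:social_limit}, and the exact pathwise conservation demanded by Condition \ref{assume:mean}; but verifying those design constraints is separate from establishing the corollary itself.
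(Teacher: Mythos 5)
Your proposal is correct and follows exactly the route the paper intends: the paper states this corollary without a separate proof, the argument being precisely yours---Theorem \ref{thm:asConv} gives almost sure convergence to a consensus state $\mbf{Q}^{\ast} = \mbf{1}\mbf{q}^{\ast}$, and the pathwise conservation in Condition \ref{assume:mean}, passed through the continuous map $\mbf{Q} \mapsto \mbf{1}^{\trans}\mbf{Q}$ to the limit, pins $\mbf{q}^{\ast} = \Pi$ on almost every path. You even correctly flag the normalization slip in the paper's statement of Condition \ref{assume:mean} (which reads $\mbf{1}^{\trans}\mbf{Q}(t) = \Pi$ where $n\Pi$ is meant, cf.\ Lemma \ref{lem:meanpreserve}), and your closing observation---that the pathwise conservation is what upgrades the conclusion from $\expe[\mbf{Q}^{\ast}] = \mbf{1}\Pi$ to $\mbf{Q}^{\ast} = \mbf{1}\Pi$ almost surely---is exactly the point of the corollary.
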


\subsection{Rate of convergence}

We now turn to bounds on the expected squared error of in the case where $\mbf{Q}(t) \to \mbf{Q}^{\ast}$ almost surely.  

\begin{theorem}[Rate of convergence]
\label{thm:rate}
Suppose that Conditions \ref{assume:coefficients},  \ref{assume:decr_stepsize}, \ref{assume:dynlimit}, and \ref{assume:social_limit} also hold.  Then there exists a constant $C$ such that
	\begin{align*}
	\expe\left[ \norm{ \mbf{Q}(t) - \mbf{Q}^{\ast} }^2 \right] \le C \delta(t).
	\end{align*}
\end{theorem}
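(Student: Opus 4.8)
The plan is to split the error $\mbf{Q}(t) - \mbf{Q}^{\ast}$ into its component along the consensus direction and its component orthogonal to it, and to bound each at rate $\delta(t)$ separately. Write $J \defeq \frac{1}{n}\mathbf{1}\mathbf{1}^{\trans}$ for the orthogonal projection onto $\mathrm{span}(\mathbf{1})$, and set $\mbf{D}(t) \defeq (I - J)\mbf{Q}(t)$ and $\bar{\mbf{q}}(t) \defeq \frac{1}{n}\mathbf{1}^{\trans}\mbf{Q}(t)$. Since $\mbf{Q}^{\ast} = \mathbf{1}\mbf{q}^{\ast}$ lies in the consensus subspace, $(I-J)\mbf{Q}^{\ast} = \mbf{0}$ and $J\mbf{Q}^{\ast} = \mbf{Q}^{\ast}$, so the error decomposes orthogonally as
\begin{align*}
\norm{\mbf{Q}(t) - \mbf{Q}^{\ast}}^2 = \norm{\mbf{D}(t)}^2 + n\norm{\bar{\mbf{q}}(t) - \mbf{q}^{\ast}}^2 .
\end{align*}
Throughout I use that every $Q_i(t)$ is a probability vector, so $\mbf{Q}(t)$, and hence (by boundedness of the weights and of $\mbf{Y}(t),\mbf{P}(t)$) also $\mbf{M}(t)$ and $\mbf{C}(t)$, have uniformly bounded second moments; Condition \ref{assume:dynlimit} gives $\norm{\tilde{H}(t)} = O(\delta(t))$ and Condition \ref{assume:social_limit} gives $\norm{\expe[\mbf{C}(t)\mid\mc{F}_t]} = O(\delta(t))$.

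For the disagreement term I would multiply \eqref{eq:compactupdate} by $I - J$. Because $\bar{H}$ is symmetric with $\bar{H}\mathbf{1} = \mbf{0}$, it commutes with $J$ and satisfies $\bar{H}(I-J) = \bar{H}$, which yields
\begin{align*}
\mbf{D}(t+1) = (I + \delta(t)\bar{H})\mbf{D}(t) + \delta(t)(I-J)\big[\tilde{H}(t)\mbf{Q}(t) + \mbf{C}(t) + \mbf{M}(t)\big].
\end{align*}
Condition \ref{assume:dynlimit} (contraction) implies that on the range of $I-J$ the symmetric matrix $\bar{H}$ is negative definite with a spectral gap $c > 0$, so $\norm{(I + \delta(t)\bar{H})\mbf{D}(t)} \le (1 - c\,\delta(t))\norm{\mbf{D}(t)}$ for $t$ large. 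Taking the conditional expectation of $\norm{\mbf{D}(t+1)}^2$, the martingale cross term drops since $\expe[\mbf{M}(t)\mid\mc{F}_t]=\mbf{0}$, the remaining cross term is $O(\delta(t)^2)\norm{\mbf{D}(t)}$ because $\norm{\tilde{H}(t)\mbf{Q}(t)}$ and $\norm{\expe[\mbf{C}(t)\mid\mc{F}_t]}$ are $O(\delta(t))$, and the pure quadratic noise term is $O(\delta(t)^2)$ by boundedness. Writing $a(t) \defeq \expe[\norm{\mbf{D}(t)}^2]$ and absorbing the cross term with Young's inequality gives a scalar recursion of the form $a(t+1) \le (1 - c\,\delta(t))a(t) + K\delta(t)^2$, and for $\delta(t) = \Theta(1/t)$ a Chung-type lemma then yields $a(t) = O(\delta(t))$.

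For the consensus term I would instead exploit that $\bar{\mbf{q}}(t) \to \mbf{q}^{\ast}$ and write the error as a tail sum of increments, rather than running a recursion in $\mbf{q}^{\ast}$ (which is not $\mc{F}_t$-measurable). Left-multiplying \eqref{eq:compactupdate} by $\frac{1}{n}\mathbf{1}^{\trans}$ and using $\mathbf{1}^{\trans}\bar{H} = \mbf{0}$ gives increments $\bar{\mbf{q}}(s+1) - \bar{\mbf{q}}(s) = b(s) + g(s)$, where the predictable part $b(s) = \frac{\delta(s)}{n}\mathbf{1}^{\trans}\big(\tilde{H}(s)\mbf{Q}(s) + \expe[\mbf{C}(s)\mid\mc{F}_s]\big)$ is $O(\delta(s)^2)$ and $g(s) = \frac{\delta(s)}{n}\mathbf{1}^{\trans}\big(\mbf{M}(s) + \mbf{C}(s) - \expe[\mbf{C}(s)\mid\mc{F}_s]\big)$ is a martingale difference with $\expe\norm{g(s)}^2 = O(\delta(s)^2)$. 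Telescoping gives $\bar{\mbf{q}}(t) - \mbf{q}^{\ast} = -\sum_{s \ge t}(b(s) + g(s))$, so by the triangle inequality and orthogonality of the martingale increments,
\begin{align*}
\expe\norm{\bar{\mbf{q}}(t) - \mbf{q}^{\ast}}^2 \le 2\Big(\sum_{s\ge t}\norm{b(s)}\Big)^2 + 2\sum_{s \ge t}\expe\norm{g(s)}^2 = O\Big(\sum_{s\ge t}\delta(s)^2\Big),
\end{align*}
which is $O(\delta(t))$ when $\delta(t) = \Theta(1/t)$. Combining the two bounds in the decomposition gives $\expe[\norm{\mbf{Q}(t) - \mbf{Q}^{\ast}}^2] = O(\delta(t))$.

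The main obstacle is the disagreement recursion: turning $a(t+1) \le (1 - c\,\delta(t))a(t) + K\delta(t)^2$ into $a(t) = O(\delta(t))$ is not automatic from Condition \ref{assume:decr_stepsize} alone but requires the step size to decay like $\Theta(1/t)$ with a leading constant large enough that the effective contraction rate $c$ times that constant exceeds $1$ — precisely the role of the large multiplier (e.g.\ $10$) used in the examples. A secondary technical point is that $\mbf{q}^{\ast}$ is not adapted to $\mc{F}_t$, which is why the consensus component must be controlled through the tail-of-increments estimate above rather than a direct Lyapunov recursion; this estimate in turn needs $\sum_{s\ge t}\delta(s)^2 = O(\delta(t))$, again an $\Theta(1/t)$-type requirement rather than a consequence of square-summability by itself.
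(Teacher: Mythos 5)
Your proposal is correct in substance but takes a genuinely different route from the paper. The paper's own proof is a black-box invocation: it rewrites \eqref{eq:compactupdate} as a standard stochastic approximation with mean field $\bar{H}\mbf{Q}(t)$, martingale noise, and an $O(\delta(t))$ perturbation, then verifies conditions (A.1)--(A.5) of Benveniste, M\'etivier and Priouret so that their Theorem 24 directly yields $\expe[\norm{\mbf{Q}(t)-\mbf{Q}^{\ast}}^2] = O(\delta(t))$; no decomposition, Lyapunov recursion, or tail estimate appears. You instead give a self-contained argument: the Pythagorean split along $J = \frac{1}{n}\mathbf{1}\mathbf{1}^{\trans}$, a spectral-gap contraction plus Chung-type recursion for the disagreement component, and a tail-of-martingale-increments bound for the consensus component. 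What your route buys is transparency on exactly the two points the paper elides. First, the cited BMP theorem is stated for an isolated, deterministic stable equilibrium, whereas here $\bar{H}$ has a whole subspace of equilibria and the limit $\mbf{Q}^{\ast} = \mathbf{1}\mbf{q}^{\ast}$ is random and not $\mc{F}_t$-measurable; your tail-sum treatment of $\bar{\mbf{q}}(t) - \mbf{q}^{\ast}$ handles this honestly, while the paper's application of the theorem quietly glosses over it. Second, the caveats you flag are real and not artifacts of your method: the $O(\delta(t))$ rate does not follow from Condition \ref{assume:decr_stepsize} alone. At $\delta(t) = \gamma/t$ your disagreement recursion needs $c\gamma > 1$ (note the leading constant $10$ in the paper's own simulated step sizes), and the consensus component is genuinely of order $\sum_{s \ge t}\delta(s)^2$, which exceeds $\delta(t)$ for admissible choices such as $\delta(t) = t^{-0.6}$; the BMP rate theorem carries analogous step-size and spectral hypotheses, so the paper's proof implicitly requires the same restrictions that you make explicit. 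What the paper's route buys is brevity and the reuse of a standard result, at the cost of hiding these hypotheses. One small caution on your write-up: reading Condition \ref{assume:dynlimit} as negative definiteness of $\bar{H}$ on $\mathbf{1}^{\perp}$ with gap $c > 0$ goes beyond its literal statement ($|\lambda| < 1$ and $\bar{H}\mathbf{1} = \mbf{0}$), which would even permit nonnegative eigenvalues; but it matches how the paper itself uses the condition in Theorem \ref{thm:asConv} (``a contraction with a single eigenvector at $\mathbf{1}$''), so you should state that interpretation as an explicit hypothesis rather than a consequence. Also beware a notational clash: the paper's proof uses $\mbf{D}(t)$ for the perturbation term, not for the disagreement $(I-J)\mbf{Q}(t)$ as you do.
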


\begin{proof}
First note that in the process \eqref{eq:linearupdate}, $Q_i(t)$ is a probability distribution, so the entire process lies in a bounded compact set, and under Conditions \ref{assume:dynlimit} and \ref{assume:social_limit} we can write the iteration as
	\begin{align*}
	\mbf{Q}(t+1) = \mbf{Q}(t) + \delta(t) \left[ \bar{H} \mbf{Q}(t) + \mbf{D}(t) + \mbf{M}(t) \right],
	\end{align*}
where the perturbation term $\norm{ \expe[\mbf{D}(t) | \mc{F}_t ] } = O(\delta(t))$.  We can now apply Theorem 24 of  Benveniste et al.~\cite[p. 246]{BMP:adaptive}, which requires checking similar conditions as the previous Theorem.
	\begin{enumerate}
	\item Condition \ref{assume:decr_stepsize} shows the step sizes are not summable~\cite[(A.1)]{BMP:adaptive}.
	\item Treat the tuple of random variables $(\mbf{A}(t), \mbf{B}(t), \mbf{W}(t), \mbf{Y}(t))$ as a state variable $\mbf{S}(t)$.  This state is measurable with respect to $\mc{F}_t$, and there exists a conditional probability distribution corresponding to the update~\cite[(A.2)]{BMP:adaptive}.
	\item Let
		\[
		\mbf{N}(t) =  \mbf{M}(t) + \resp{\mbf{C}(t) }- \expe[\mbf{D}(t) | \mc{F}_t ],
		\]
so $\mbf{N}(t)$ is still a martingale difference.  If we define $\mbf{J}(t) = \frac{1}{\delta(t)} \expe[\resp{\mbf{C}(t)} | \mc{F}_t ]$ we can rewrite the iterates as
		\begin{align*}
		\mbf{Q}(t+1) = \mbf{Q}(t) + \delta(t) \left[ \bar{H} \mbf{Q}(t) + \mbf{N}(t) \right] + \delta(t)^2 \mbf{J}(t).
		\end{align*}
	Again, the terms $\norm{ \bar{H} \mbf{Q}(t) + \mbf{N}(t) }$ and $\norm{ \mbf{J}(t) }$ are bounded by a constant~\cite[(A.3) and (A.5)]{BMP:adaptive}.
	\item Since $\bar{H}$ is a linear contraction, it is also Lipschitz, and the martingale difference $\mbf{N}(t)$ is bounded, which implies condition (A.4) of Benveniste et al.~\cite[p. 216]{BMP:adaptive}.
	\end{enumerate}
With the validity of the above conditions , the assertion of the theorem follows directly \cite[p. 216]{BMP:adaptive}:
	\begin{align*}
	\expe\left[ \norm{ \mbf{Q}(t) - \mbf{Q}^{\ast} }^2 \right] = O(\delta(t)). 
	\end{align*}
\end{proof}


\subsection{Example Algorithms Revisited}

We can now describe how the results apply to the algorithms described in section \ref{sec:examples}.

\subsubsection{Averaging with social samples}  Our first algorithm in \eqref{eq:examp:singleton} was one in which the nodes perform a weighted average of the distribution of the messages they receive with their current estimate.  The general form of the algorithm was
	\[
	\mbf{Q}(t+1) = \left(I - A \right) \mbf{Q}(t) + W \mbf{Y}(t),
	\]
which corresponds to choosing $\delta(t) = 1$ and $B(t) = 0$.  For the specific example in \eqref{eq:examp:singleton}, $A = \frac{1}{\dmax + 1} D$ and $W = \frac{1}{\dmax + 1} G$, where $G$ is the adjacency matrix of the graph and $D$ is the diagonal matrix of degrees.  Furthermore, $\mbf{P}(t) = \mbf{Q}(t)$ for all $t$.   
	\begin{align}
	\mbf{Q}(t+1) = \mbf{Q}(t) + (W - A) \mbf{Q}(t) + W ( \mbf{Y}(t) - \mbf{Q}(t) ).
	\label{eq:singletonLap}
	\end{align}
The term $W - A$ is the graph Laplacian of the graph with edge weights given by $W$.  The following is is a corollary of Lemma \ref{lem:meanpreserve} and Lemma \ref{lem:singleton}.

\begin{corollary}
For the update given in \eqref{eq:singletonLap}, the estimates $\mbf{Q} \to \mbf{Q}^*$ almost surely, where $\mbf{Q}^*$ is a random matrix taking values in the set $\{ \mbf{1} \mbf{q}^{*} : \mbf{q}^{*} \in \mc{Y} \}$ such that $\expe[\mbf{Q}^*] = \mbf{1} \Pi$.
\end{corollary}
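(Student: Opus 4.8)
The plan is to recognize the update \eqref{eq:singletonLap} as a special case governed simultaneously by Lemma \ref{lem:meanpreserve} and Lemma \ref{lem:singleton}, and then to combine their two conclusions through a single limit-and-expectation interchange. First I would verify that the hypotheses of both lemmas hold for the specific choice $\delta(t) = 1$, $W = \frac{1}{\dmax+1} G$, $A = \frac{1}{\dmax+1} D$, and $B = 0$, together with $\mbf{P}(t) = \mbf{Q}(t)$. Condition \ref{assume:coefficients} reduces to checking that the $i$-th row sum of $W$ equals $A_{ii}$: since the $i$-th row of $G$ has exactly $d_i$ nonzero entries, $\sum_{j \in \mc{N}_i} W_{ij} = \frac{d_i}{\dmax+1} = A_{ii}$ while $B_{ii} = 0$, so \eqref{eq:assume:convex} holds. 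Because $W$, $A$, and $B$ are deterministic and time-invariant (the grid is fixed), they are trivially independent of the social samples $\mbf{Y}(t)$ and constant in $t$, so the remaining hypotheses of both lemmas are met.

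Next I would simply read off the two conclusions. Lemma \ref{lem:singleton} yields the almost sure convergence $\mbf{Q}(t) \to \mbf{Q}^* = \mbf{1}\mbf{q}^*$ with $\mbf{q}^* \in \mc{Y} - \{\mbf{0}\}$; this already establishes that the limit exists almost surely and that $\mbf{Q}^*$ is a consensus matrix supported on the singleton set $\{ \mbf{1}\mbf{q}^* : \mbf{q}^* \in \mc{Y} \}$, which are the first two assertions of the corollary. Lemma \ref{lem:meanpreserve} gives the mean-preservation identity $\expe[\mbf{1}^{\trans} \mbf{Q}(t)] = n\Pi$ for every $t$.

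It then remains only to pin down the expectation of the random limit. I would note that $\mbf{1}^{\trans} \mbf{Q}^* = \mbf{1}^{\trans}\mbf{1}\,\mbf{q}^* = n\,\mbf{q}^*$, so identifying $\expe[\mbf{q}^*]$ suffices. Each $Q_i(t)$ is a probability distribution, hence every entry of $\mbf{Q}(t)$ lies in $[0,1]$ and the process is uniformly bounded; combined with the almost sure convergence from Lemma \ref{lem:singleton}, the bounded convergence theorem permits passing the limit through the expectation,
\[
\expe[\mbf{1}^{\trans} \mbf{Q}^*] = \lim_{t\to\infty} \expe[\mbf{1}^{\trans} \mbf{Q}(t)] = n\Pi,
\]
where the last equality is Lemma \ref{lem:meanpreserve}. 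Since the left-hand side equals $n\,\expe[\mbf{q}^*]$, I conclude $\expe[\mbf{q}^*] = \Pi$ and therefore $\expe[\mbf{Q}^*] = \mbf{1}\,\expe[\mbf{q}^*] = \mbf{1}\Pi$, establishing the third assertion.

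The only genuinely delicate step is the interchange of limit and expectation; everything else is verification of hypotheses and bookkeeping. That interchange is justified cleanly by the boundedness of the iterates (they are probability vectors), so I do not anticipate any real obstacle beyond correctly invoking the two prior lemmas and being careful that the almost sure limit $\mbf{Q}^*$ is itself a well-defined, measurable random matrix as the pointwise limit of the measurable iterates.
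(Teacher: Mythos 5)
Your proposal is correct and follows essentially the same route as the paper, which states the corollary without proof as a direct consequence of Lemma \ref{lem:meanpreserve} and Lemma \ref{lem:singleton}; your verification of Condition \ref{assume:coefficients} for the specific weights and your bounded-convergence interchange of limit and expectation supply exactly the details the paper leaves implicit. Note also that your argument actually yields the slightly stronger localization $\mbf{q}^* \in \mc{Y} \setminus \{\mbf{0}\}$, which is consistent with (a subset of) the set stated in the corollary.
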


Examining \eqref{eq:singletonLap}, we see that the the Laplacian term drives the iteration to a consensus state, but the only stable consensus states are those for which $\mbf{Y}(t) - \mbf{Q}(t) = \mbf{0}$, which means $\mbf{Y}(t)$ must be equal to $\mbf{Q}(t)$ almost surely.  This means each row of $\mbf{Q}(t)$ must correspond to a degenerate distribution of the form $\mbf{e}_m$.  

\subsubsection{Averaging with social samples and decaying step size}  The second class of algorithms, exemplified by \eqref{eq:examp:discount}, has the following general form:
	\[
	\mbf{Q}(t+1) = \left( 1 - \delta(t) A \right) \mbf{Q}(t) + \delta(t) W \mbf{Y}(t),
	\]
and again $\mbf{P}(t) = \mbf{Q}(t)$ for all $t$.  This is really the same as \eqref{eq:singletonLap} but with a decaying step size $\delta(t)$:
	\begin{align}
	\mbf{Q}(t+1) &= \mbf{Q}(t) + \delta(t) (W - A) \mbf{Q}(t) 
	\nonumber \\ & \hspace{1in}
		+ \delta(t) W ( \mbf{Y}(t) - \mbf{Q}(t) ).
	\label{eq:examp:discountLap}
	\end{align}
However, the existence of a decreasing step size means that the iterates under this update behave significantly differently than those governed by \eqref{eq:singletonLap}. The convergence of this algorithm is characterized by Theorems \ref{thm:asConv} and \ref{thm:rate}.
	
\begin{corollary}
For the update given in \eqref{eq:examp:discountLap} with $\delta(t) = 1/t$, the estimates $\mbf{Q}(t) \to \mbf{Q}^*$ almost surely, where $\mbf{Q}^*$ is a random matrix in the set $\{ \mbf{1} \mbf{q}^{\trans} : \norm{\mbf{q}}_1 = 1, q_m > 0 \}$ and $\expe[\mbf{Q}^*] = \mbf{1} \Pi^{\trans}$.  Furthermore, $\expe[ \norm{ \mbf{Q}(t) - \mbf{Q}^*}^2 ] = O(1/t)$.
\end{corollary}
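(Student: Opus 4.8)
The plan is to realize this corollary as a direct instantiation of Theorems~\ref{thm:asConv} and~\ref{thm:rate}, so the bulk of the work is checking that the structural hypotheses hold for the specific update \eqref{eq:examp:discountLap}. Here the coefficients are time-invariant and deterministic: $A = \frac{1}{\dmax+1} D$, $W = \frac{1}{\dmax+1} G$, $B = 0$, the step size is $\delta(t) = 1/t$, and $\mbf{P}(t) = \mbf{Q}(t)$ by construction. Condition~\ref{assume:coefficients} is immediate, since for every node $i$ the neighbor weights sum to $\sum_{j \in \mc{N}_i} W_{ij} = d_i/(\dmax+1) = A_{ii}$ while $B_{ii} = 0$, so the left-hand side of \eqref{eq:assume:convex} vanishes. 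Condition~\ref{assume:decr_stepsize} is the standard harmonic-series check: $\sum_t 1/t = \infty$ and $\sum_t 1/t^2 < \infty$.

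Next I would verify the two dynamical hypotheses. Because $A$, $B$, $W$ are deterministic and constant, $\bar{H}(t) = \bar{W}(t) - \bar{B}(t) - \bar{A}(t) = W - A = -\frac{1}{\dmax+1} L$, where $L = D - G$ is the graph Laplacian; thus $\bar{H}$ is time-invariant (so the perturbation $\tilde{H}(t)$ is identically zero, trivially $O(\delta(t))$), it is symmetric since the grid is undirected, and $\bar{H}\mbf{1} = -\frac{1}{\dmax+1} L \mbf{1} = \mbf{0}$. The remaining spectral content of Condition~\ref{assume:dynlimit} follows from connectivity of the $5\times 5$ grid: $L$ is positive semidefinite with a simple zero eigenvalue, so $\bar{H}$ is negative semidefinite with kernel exactly $\mathrm{span}(\mbf{1})$ and strictly negative nonzero eigenvalues, which is what makes the mean dynamics contract toward the consensus line. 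Condition~\ref{assume:social_limit} is even simpler here: substituting $\mbf{P}(t) = \mbf{Q}(t)$ and $W = \bar{W}$, $B = \bar{B}$ into the definition \eqref{eq:perturbdef} makes both terms of $\mbf{C}(t)$ vanish identically, so $\norm{\expe[\mbf{C}(t) \mid \mc{F}_t]} = 0 = O(\delta(t))$.

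With the hypotheses in hand, Theorem~\ref{thm:asConv} gives $\mbf{Q}(t) \to \mbf{Q}^* = \mbf{1}\mbf{q}^*$ almost surely for a random consensus vector $\mbf{q}^*$, and Lemma~\ref{lem:meanpreserve} (whose conditions are met since $\mbf{P}(t) = \mbf{Q}(t)$ and $W, B$ are independent of $\mbf{Y}(t)$) gives $\expe[\mbf{Q}^*] = \mbf{1}\Pi$. That each realization lies in the claimed set is then read off as follows: each $Q_i(t)$ is a probability distribution (a consequence of Condition~\ref{assume:coefficients}), so the defining equalities pass to the limit to give $\norm{\mbf{q}^*}_1 = 1$ with nonnegative entries. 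The rate statement $\expe[\norm{\mbf{Q}(t) - \mbf{Q}^*}^2] = O(1/t)$ is the conclusion of Theorem~\ref{thm:rate} applied with $\delta(t) = 1/t$, whose hypotheses are exactly the conditions just verified.

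The main obstacle is the strict positivity $q_m > 0$, which does not follow from the convergence theorems alone. The natural tool is the per-coordinate total mass $S_m(t) = \sum_i Q_{i,m}(t)$, which by \eqref{martingale} is a nonnegative bounded martingale with $S_m(0) = n\Pi_m$ and limit $S_m^* = n q_m^*$. Two observations help: whenever $\Pi_m > 0$ some node starts with positive mass on $m$, and since the diagonal coefficient $1 - \delta(t) A_{ii} > 0$ for all $t \ge 1$, that node retains strictly positive mass on coordinate $m$ at every finite time, so $S_m(t) > 0$ for all $t$. The delicate point is that $\{S_m = 0\}$ is an absorbing event and a nonnegative martingale that is positive at every finite time may still converge to zero; ruling this out, and thereby establishing $q_m^* > 0$ almost surely, is where a genuine argument is needed, presumably a diffusion or connectivity estimate lower-bounding the limiting mass $S_m^*$. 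I expect this to be the crux of a fully rigorous proof.
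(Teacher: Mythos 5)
Your proposal is correct and takes essentially the same route as the paper, which offers no proof of this corollary beyond the remark that the convergence ``is characterized by Theorems \ref{thm:asConv} and \ref{thm:rate}'': your verification of Conditions \ref{assume:coefficients}--\ref{assume:social_limit} (with $\bar{H} = -\frac{1}{\dmax+1}L$ time-invariant, $\tilde{H}(t) \equiv 0$, $\mbf{C}(t) \equiv \mbf{0}$ since $\mbf{P}(t) = \mbf{Q}(t)$ and $W = \bar{W}$, $B = \bar{B}$) plus Lemma \ref{lem:meanpreserve} for the expectation is precisely the omitted instantiation, and your reading of Condition \ref{assume:dynlimit} as negative semidefiniteness with kernel $\mathrm{span}(\mbf{1})$ is the substantively correct one. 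Your closing concern about strict positivity $q_m > 0$ is well taken but is a looseness in the paper's statement rather than a gap you need to fill: the paper proves it nowhere, and it even fails as literally written whenever $\Pi_m = 0$ (then $Q_{i,m}(t) \equiv 0$ for all $i,t$), so the defensible claim is $q_m \ge 0$ (or positivity restricted to the support of $\Pi$), with your martingale observation on $S_m(t) = \sum_i Q_{i,m}(t)$ being the natural starting point if one wished to establish more.
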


\subsubsection{Exchange with social samples and censoring}  The last algorithm in \eqref{eq:examp:censor} has a more complex update rule, but it is a special case of the generic update
	\begin{align}
	\mbf{Q}(t+1) = \mbf{Q}(t) - \delta(t) B(t) \mbf{Y}(t) + \delta(t) W(t) \mbf{Y}(t).
	\label{eq:examp:censorGeneral}
	\end{align}
For a fixed weight matrix $W$, define the the thresholds $\Delta(t) = \delta(t) \sum_{j \in \mc{N}_i} W_{ij}$ and the sampling distribution $P_{i,m}(t) = \mbf{1}( Q_{i,m}(t) > \Delta(t) )$.  The social samples $\mbf{Y}(t)$ are sampled according to this distribution and the weight matrices are defined by
	\begin{align*}
	W_{ij}(t) &= \left\{
		\begin{array}{ll}
		0 & i \ne j, Y_{i} = \mbf{0} \ \mathrm{or}\  Y_{i} \ne \mbf{0} \\
		W_{ij} & i \ne j, Y_{i} = \mbf{0} \ \mathrm{and}\  Y_{i} \ne \mbf{0}
		\end{array}
		\right. \\
	B_{ii}(t) &= \sum_{j \in \mc{N}_i} W_{ij}(t)
	\end{align*}
In this algorithm the iterates keep $\sum_{i} Q_{i,m}(t)$ constant over time by changing the sampling distribution $\mbf{P}(t)$ over time and by using the weight matrix $B(t)$ to implement a ``mass exchange'' policy between nodes.  At each time, agent $i$ samples a opinion $Z_i(t)$.  If $Q_{i,Z_i(t)}(t)$ is large enough, the agent sends $Y_i(t) = Z_i(t)$, giving $\delta(t) W_{ij}$ mass to each neighbor $j$ and subtracting the corresponding mass from its own opinion.  If $Q_{i,\mbf{Z}_i(t)}(t)$ is not large enough it exchanges nothing with its neighbors.  The distribution $\mbf{P}(t)$ implements this ``censoring'' operation.  By keeping the total sum on each opinion fixed, Corollary \ref{cor:censor} shows that the estimates converge almost surely to $\Pi$.

\begin{corollary}
For the update given in \eqref{eq:examp:censorGeneral} with $\delta = 1/t$, the estimates $\mbf{Q}(t) \to \mbf{1} \Pi$ almost surely, and $\expe[ \norm{ \mbf{Q}(t) - \mbf{1} \Pi}^2 ] = O(1/t)$.

\end{corollary}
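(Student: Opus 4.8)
The plan is to recognize this corollary as a direct instantiation of Corollary~\ref{cor:censor} (which supplies the almost-sure limit $\mbf{1}\Pi$) and Theorem~\ref{thm:rate} (which supplies the $O(1/t)$ rate), applied to the censoring scheme~\eqref{eq:examp:censorGeneral}. All of the work is therefore in verifying Conditions~\ref{assume:coefficients}--\ref{assume:mean} for the specific choice $A(t)=0$, the censoring weights $W(t),B(t)$, the censored sampling law $\mbf{P}(t)$, and $\delta(t)=1/t$. Writing $\xi_i=\mbf{1}(Y_i(t)\ne\mbf{0})$ for the activity indicator of node $i$, the construction gives the symmetric mass-exchange weights $W_{ij}(t)=W_{ij}\xi_i\xi_j$ and $B_{ii}(t)=\sum_{j\in\mc{N}_i}W_{ij}(t)$, so Condition~\ref{assume:coefficients} is immediate from $\sum_j W_{ij}(t)-A_{ii}(t)-B_{ii}(t)=0$, and Condition~\ref{assume:decr_stepsize} holds trivially for $\delta(t)=1/t$. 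I would first isolate the two elementary consequences of censoring on which everything else rests: because $\mbf{P}(t)$ deletes exactly those coordinates with $Q_{i,m}(t)\le\Delta(t)=\delta(t)\sum_j W_{ij}$, of which there are at most $M$, each of size at most $\Delta(t)=O(\delta(t))$, both the deviation $\mbf{P}(t)-\mbf{Q}(t)$ and the per-node silence probability $1-p_i(t)$ (with $p_i(t)=\sum_m P_{i,m}(t)$) are $O(\delta(t))$ entrywise. The same threshold guarantees $Q_{i,m}(t+1)\ge 0$, so each $Q_i(t)$ stays a probability vector and the iterates remain in a bounded set.

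Next I would compute the mean dynamics to verify Condition~\ref{assume:dynlimit}. Conditioning on $\mc{F}_t$ and using that distinct nodes sample independently, $\bar{W}_{ij}(t)=\expe[W_{ij}(t)\mid\mc{F}_t]=W_{ij}\,p_i(t)p_j(t)$ for $i\ne j$, whence $\bar{H}(t)=\bar{W}(t)-\bar{B}(t)=\big(W-\diag(W\mbf{1})\big)+\tilde{H}(t)$. Since $p_i(t)p_j(t)=1-O(\delta(t))$, the perturbation obeys $\tilde{H}_{ij}(t)=O(\delta(t))$, and the limit $\bar{H}=W-\diag(W\mbf{1})$ is the negated weighted graph Laplacian: it is symmetric, annihilates $\mbf{1}$, and is negative semidefinite with kernel spanned by $\mbf{1}$, i.e.\ a contraction onto the consensus subspace as required by Condition~\ref{assume:dynlimit}.

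The crux is Condition~\ref{assume:social_limit}, namely controlling $\expe[\mbf{C}(t)\mid\mc{F}_t]$ for $\mbf{C}(t)$ as in~\eqref{eq:perturbdef}. Because $\mbf{P}(t)-\mbf{Q}(t)$ is $\mc{F}_t$-measurable, the first term of $\mbf{C}(t)$ contributes $(\bar{W}(t)-\bar{B}(t))(\mbf{P}(t)-\mbf{Q}(t))$, which is $O(\delta(t))$ by the censoring bound above. The delicate part is the second term, whose conditional mean is the covariance $\expe[(W(t)-B(t))\mbf{Y}(t)\mid\mc{F}_t]-(\bar{W}(t)-\bar{B}(t))\mbf{P}(t)$, because the weights $W_{ij}(t)=W_{ij}\xi_i\xi_j$ are correlated with the messages $\mbf{Y}(t)$ through the activity indicators. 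Expanding the $i$-th row with $(W(t)-B(t))\mbf{Y}(t)$ row $i$ equal to $\sum_j W_{ij}\xi_i\xi_j(Y_j(t)-Y_i(t))$ and invoking node-wise independence, this covariance reduces to $\sum_j W_{ij}\big[p_i(t)\mbf{P}_j(t)(1-p_j(t))-p_j(t)\mbf{P}_i(t)(1-p_i(t))\big]$, where $\mbf{P}_i(t)$ is the $i$-th row of $\mbf{P}(t)$; since $1-p_i(t),1-p_j(t)=O(\delta(t))$ and all remaining factors are bounded, the whole covariance is $O(\delta(t))$. Hence $\norm{\expe[\mbf{C}(t)\mid\mc{F}_t]}=O(\delta(t))$, giving Condition~\ref{assume:social_limit}.

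Finally, Condition~\ref{assume:mean} follows from the edge-symmetric mass-exchange structure: on each edge $(i,j)$ with both endpoints active, node $i$ transfers $\delta(t)Y_i(t)$ to $j$ while receiving $\delta(t)Y_j(t)$, and on edges with a silent endpoint no mass moves, so $\mbf{1}^{\trans}\mbf{Q}(t)$ is conserved and equal to its initial value $n\Pi$; equivalently the column-wise average equals $\Pi$ for all $t$. With Conditions~\ref{assume:coefficients}--\ref{assume:mean} established, Corollary~\ref{cor:censor} gives $\mbf{Q}(t)\to\mbf{1}\Pi$ almost surely, and Theorem~\ref{thm:rate} gives $\expe[\norm{\mbf{Q}(t)-\mbf{Q}^{\ast}}^2]=O(\delta(t))=O(1/t)$ with $\mbf{Q}^{\ast}=\mbf{1}\Pi$. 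I expect the main obstacle to be the covariance computation for Condition~\ref{assume:social_limit}: it requires using the exact independence-across-nodes structure of the sampling together with the fact that the censoring threshold scales like $\delta(t)$, so that the silence probabilities $1-p_i(t)$ decay at precisely the rate needed; the remaining conditions, including the Laplacian contraction property, are comparatively routine.
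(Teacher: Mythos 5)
Your proposal is correct and follows essentially the same route as the paper, which obtains this corollary precisely by observing that the censored sampling distribution and the symmetric mass-exchange weights $W(t)$, $B(t)$ satisfy Conditions \ref{assume:coefficients}--\ref{assume:mean} (mass conservation giving Condition \ref{assume:mean}) and then invoking Corollary \ref{cor:censor} for the almost-sure limit $\mbf{1}\Pi$ and Theorem \ref{thm:rate} for the $O(\delta(t)) = O(1/t)$ rate. Your explicit verifications---in particular the covariance computation establishing $\norm{\expe[\mbf{C}(t) \mid \mc{F}_t]} = O(\delta(t))$ from the $O(\delta(t))$ censoring threshold and silence probabilities---merely fill in details the paper leaves implicit.
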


\section{Empirical results}

\begin{figure*}
\centering
\subfigure[Grid]{
\includegraphics[width=2.8in]{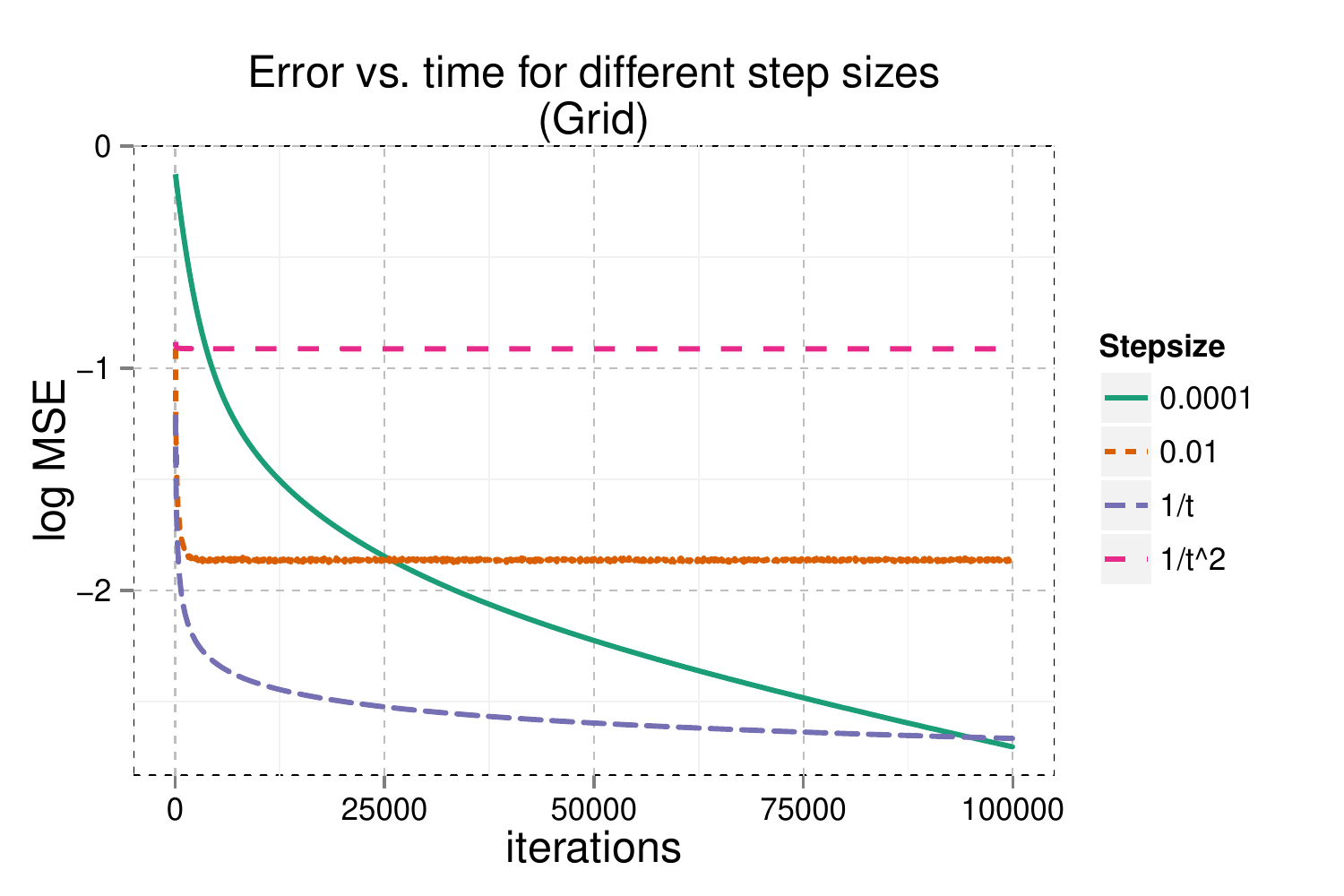}
\label{fig:evt1}
}
\subfigure[Preferential Attachment]{
\includegraphics[width=2.8in]{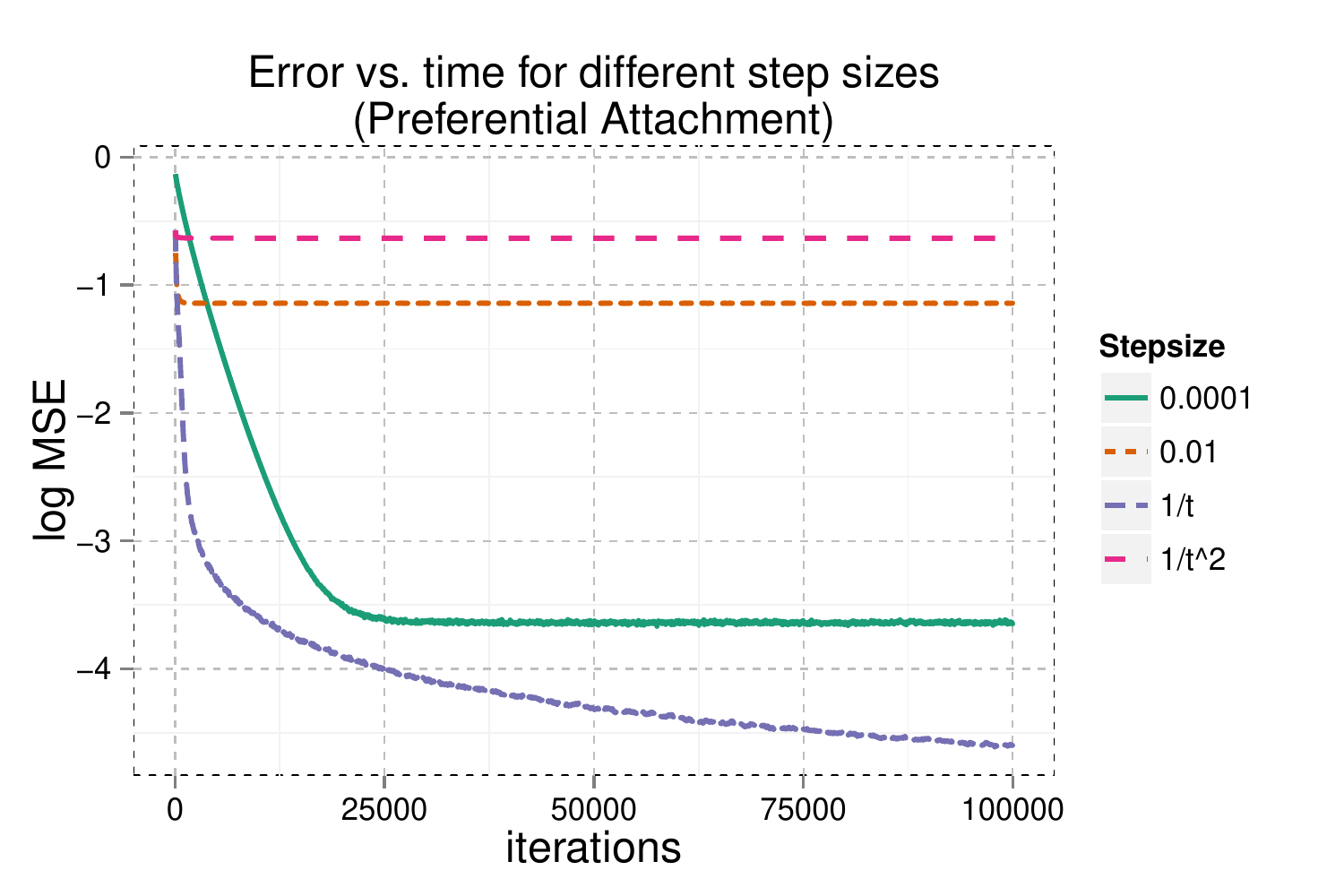}
 \label{fig:evt2}
} \\
\subfigure[Watts-Strogatz]{
\includegraphics[width=2.8in]{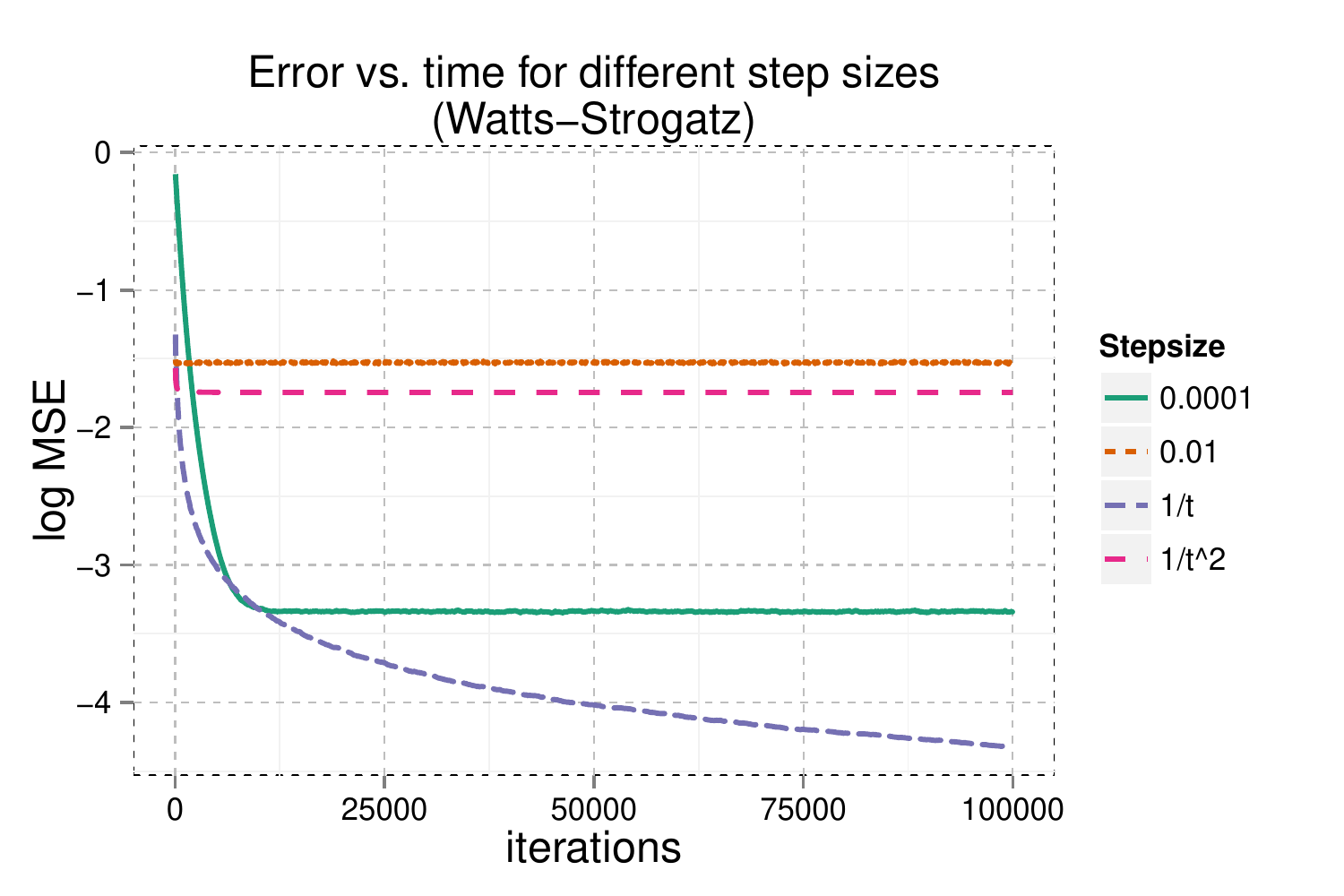}
\label{fig:evt3}
}
\subfigure[Star]{
\includegraphics[width=2.8in]{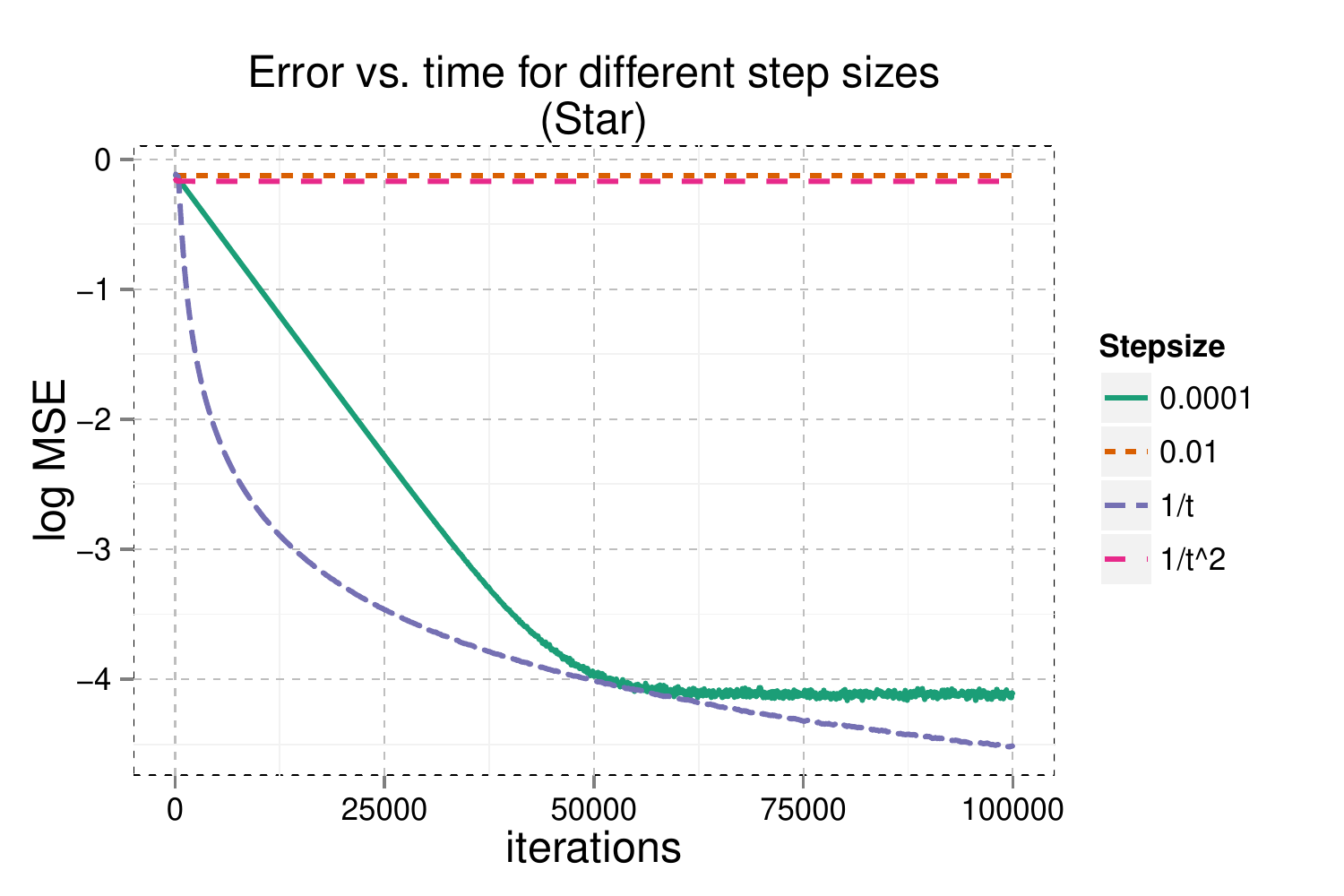}
\label{fig:evt4}
}
\caption{Average MSE between the agent estimates and the true histogram initial node values versus time for 4 different 100-node graphs : a $10 \times 10$ grid, preferential attachment graphs with three edges generated per new node, Watts-Strogatz graphs with rewiring probability $0.1$, and a star with one central node and 99 peripheral nodes. \label{fig:evt}}
\end{figure*}

The preceding analysis shows the almost-sure convergence of all node estimates for some social sampling strategies, and in some cases characterizes the rate of convergence.  However, the analysis does not capture the effect of problem parameters such as the initial distribution of node values and the network topology.  These factors are well known to affect the rate of convergence of many distributed estimation and consensus procedures -- in this section we provide some empirical results about these effects.

We considered a number of different topologies for our simulations:
	\begin{itemize}
	\item The $\sqrt{n} \times \sqrt{n}$ grid has vertex set $[\sqrt{n}] \times [\sqrt{n}]$ and edge exists between two nodes whose $L_1$ distance is 1.
	\item A star network has a single central vertex which is connected by a single edge to $n - 1$ other vertices.
	\item An Erd\H{o}s-R\'{e}nyi graph~\cite{Erdos:1960} on vertex set $[n]$ contains each edge $(i,j)$ independently with probability $p$.  We choose $p = 0.6$.
	\item A preferential attachment graph \cite{barabasi1999emergence,BollobasR04} is constructed by adding vertices one at a time.  A new vertex is connected to an existing vertex with a probability that is a function of the current degree of the vertices.  We allowed each new vertex to be connected to $3$ preceding vertices.
	\item A 2-dimensional Watts-Strogatz graph is a grid with randomly ``rewired'' edges~\cite{watts1998collective}.  We chose rewiring probability $0.1$.
	\end{itemize}
Details on the random graph models can be found in the \texttt{igraph} package for the \texttt{R} statistical programming language \cite{igraph}. 
In the simulations we calculated the average of $\frac{1}{n} \norm{ \mbf{Q}(t) - \mbf{1} \Pi}^2$ across runs of the simulation, which is the average mean-squared-error (MSE) per node of the estimates.

\subsection{Network size and topology}

We were interested in finding how the convergence time depends on the step size and network topology.  To investigate this we simulated the grid, preferential attachment, Watts-Strogatz, and star topologies described above on networks of $n = 100$ nodes with $M = 5$.  The initial node values were drawn i.i.d. from a distribution $(0.1, 0.25, 0.15, 0.3, 0.2)$ on 5 elements.
Simulations were averaged over 100 instances of the network and initial values.

Figure \ref{fig:examp:censor} shows that the estimates converge almost surely to the true histogram $\Pi$ when $\delta(t) = 1/t$.  While our theoretical analysis was for this case, in practice stochastic optimization is often used with a constant step size because the algorithm converges faster to a neighborhood of the optimal solution when $\delta(t)$ is appropriately small.  In order to assess if this is the case in our model, we simulated variants of the algorithm in \eqref{eq:examp:censor} with different settings for $\delta(t)$.

Figure \ref{fig:evt} shows the error between the local estimates and the true histogram of initial node values under four different topologies and four different choices for $\delta(t)$.  For $\delta(t) = 1/t$ the algorithm satisfies the conditions of the theorem and we can see the rather rapid convergence to the mean.  If $\delta(t)$ is constant, then the error does not converge to $0$ but can still be quite small if the step size is small.  This is similar to the fixed-weight algorithm with a weight matrix that has very small off-diagonal entries.  By contrast, the weight sequence $\delta(t) = 1/t^2$ decays too quickly and there is a large residual MSE.  

\resp{We see a greater effect on the convergence time by looking at different graph topologies for the same number of nodes.  For graphs with fast mixing time such as the preferential attachment and Watts-Strogatz model, the error decreases much more rapidly than for the grid or star.  This suggests that the mixing time of the random walk associated with the weight matrix of the algorithm should affect the rate of convergence, as is the case in other consensus algorithms. } The effect of choosing weight sequences that do not guarantee almost sure convergence also varies depending on the network topology.  For sparsely connected networks like the star, the performance is quite poor unless the weight sequence is chosen appropriately.  However, for denser networks like the Watts-Strogatz model, the difference may be more modest.

\subsection{The effect of the initial distribution}

\begin{figure}
\centering
\includegraphics[width=2.6in]{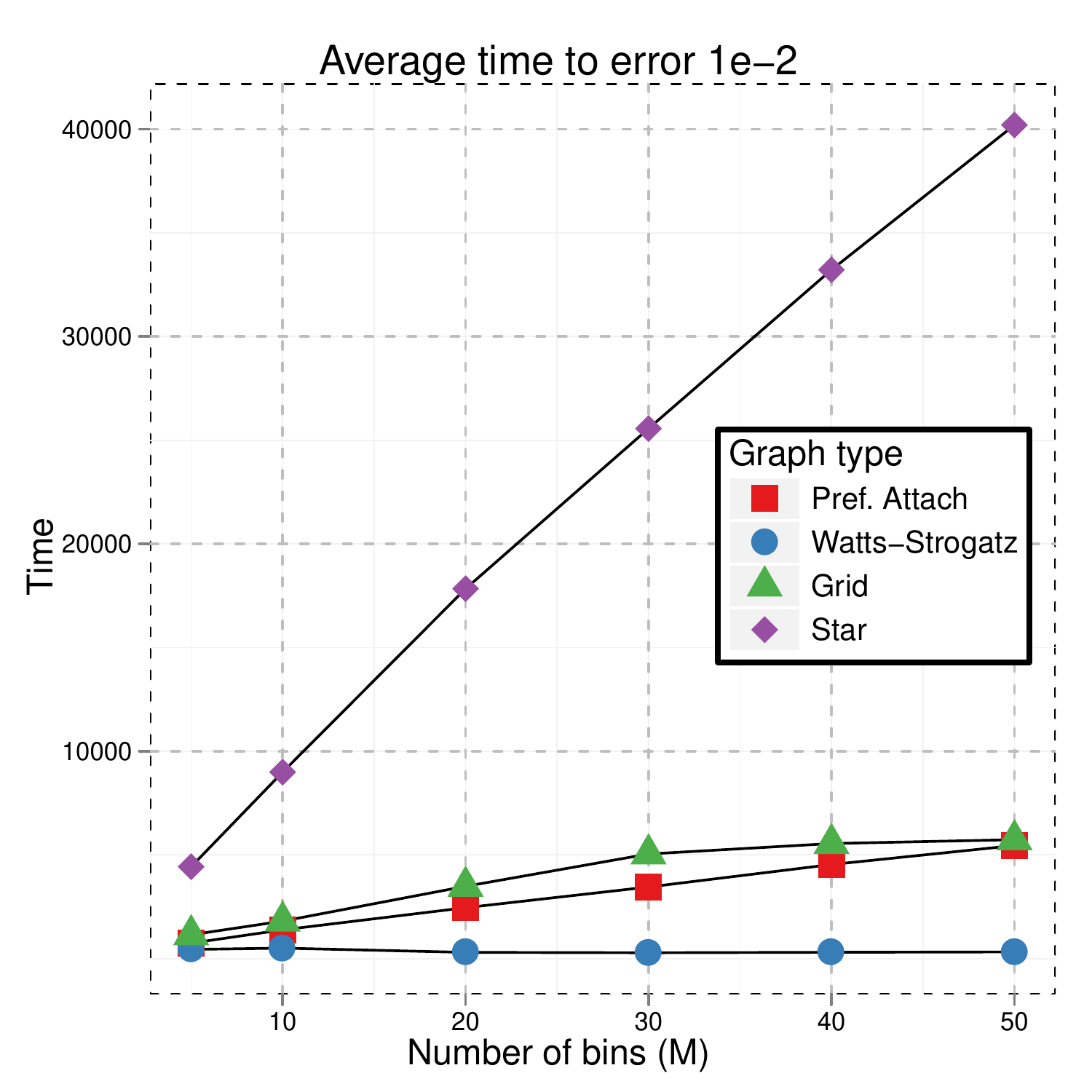}
\caption{Time to get to MSE of $10^{-2}$, averaged across nodes, versus $M$ for a uniform distribution on the four different network topologies with $n = 100$ nodes.  \label{fig:tvm2}}
\end{figure}

\begin{figure*}
\centering
\subfigure[Erd\H{o}s-R\'{e}nyi]{\includegraphics[width=2.6in]{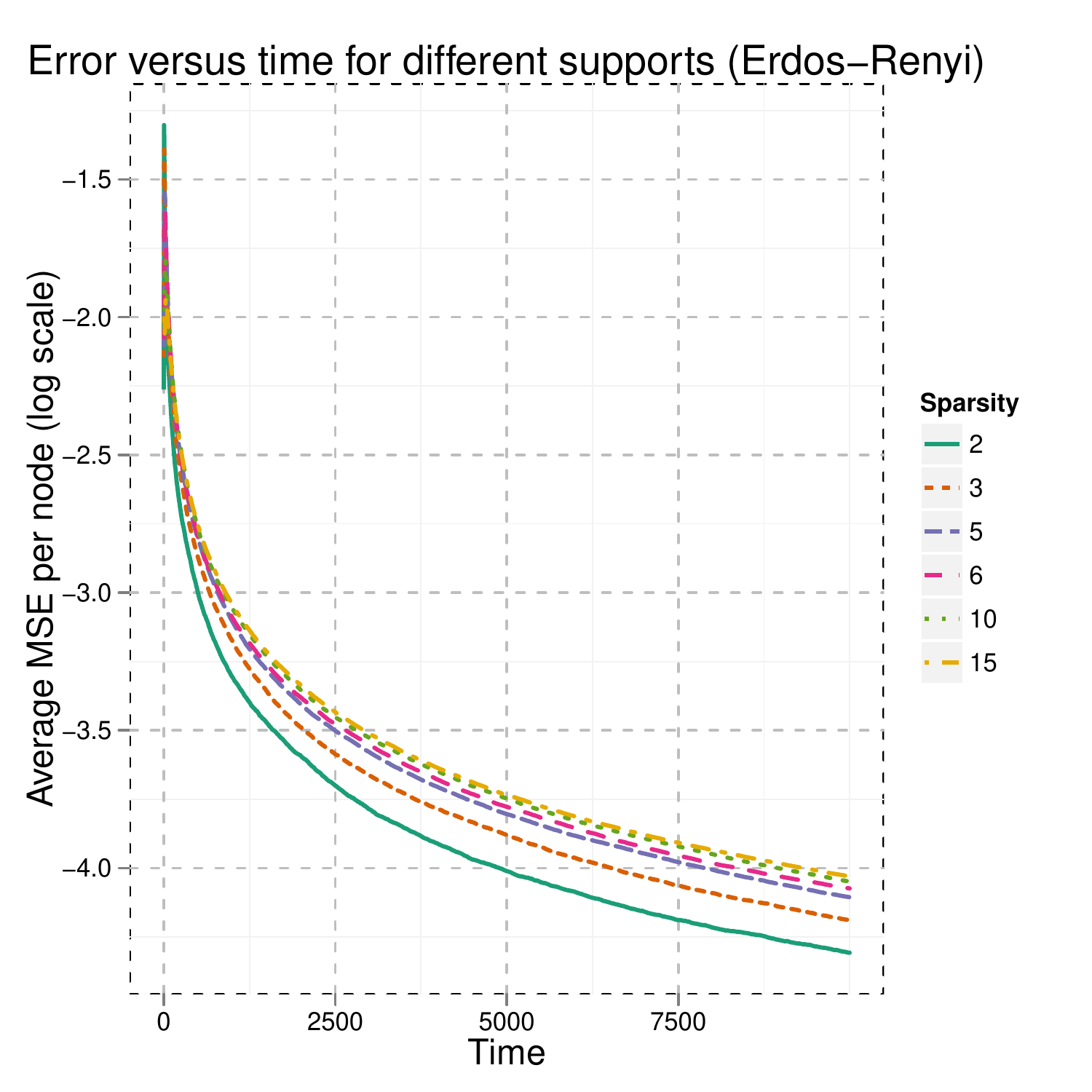}}%
\subfigure[Grid]{\includegraphics[width=2.6in]{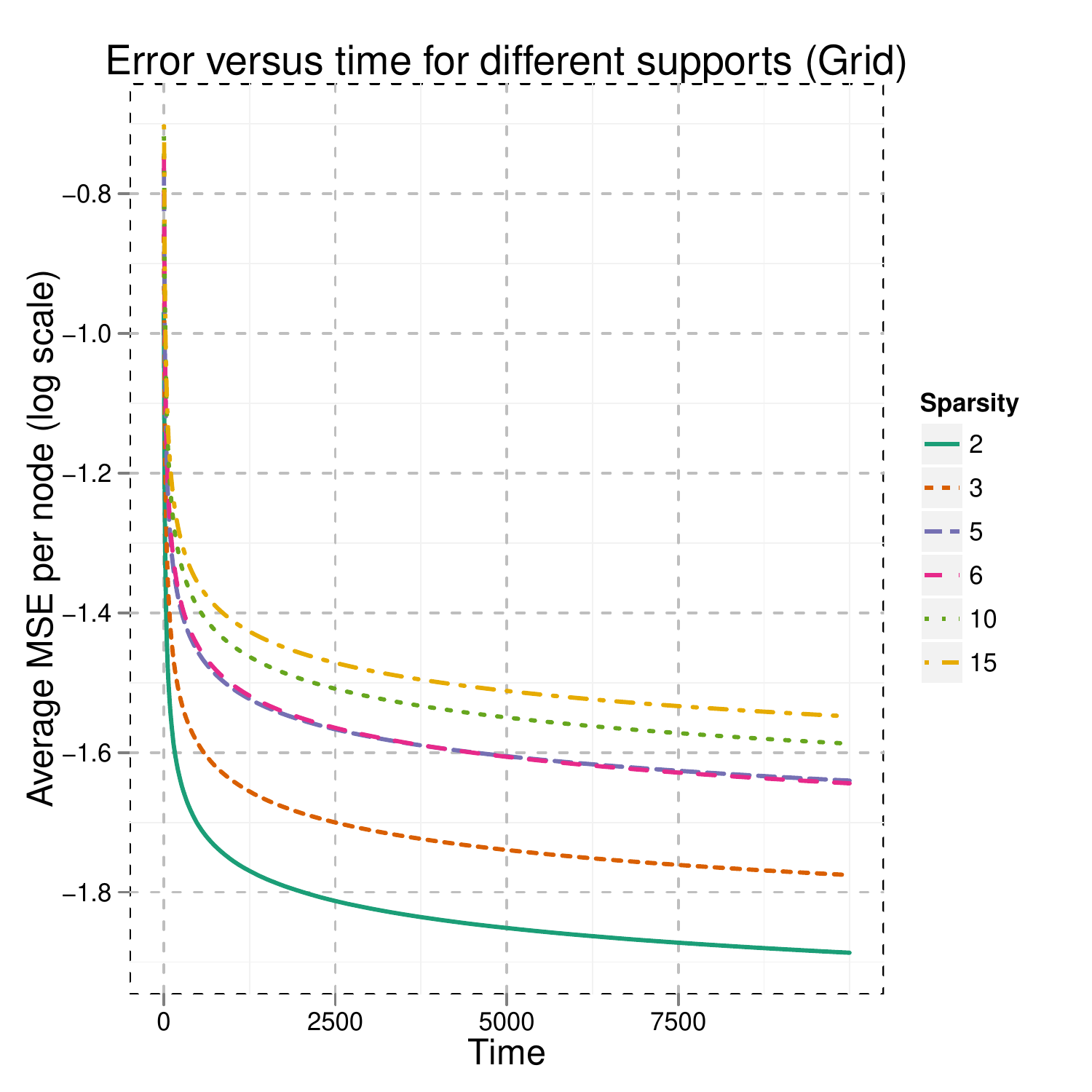}} \\
\subfigure[Preferential attachment]{\includegraphics[width=2.6in]{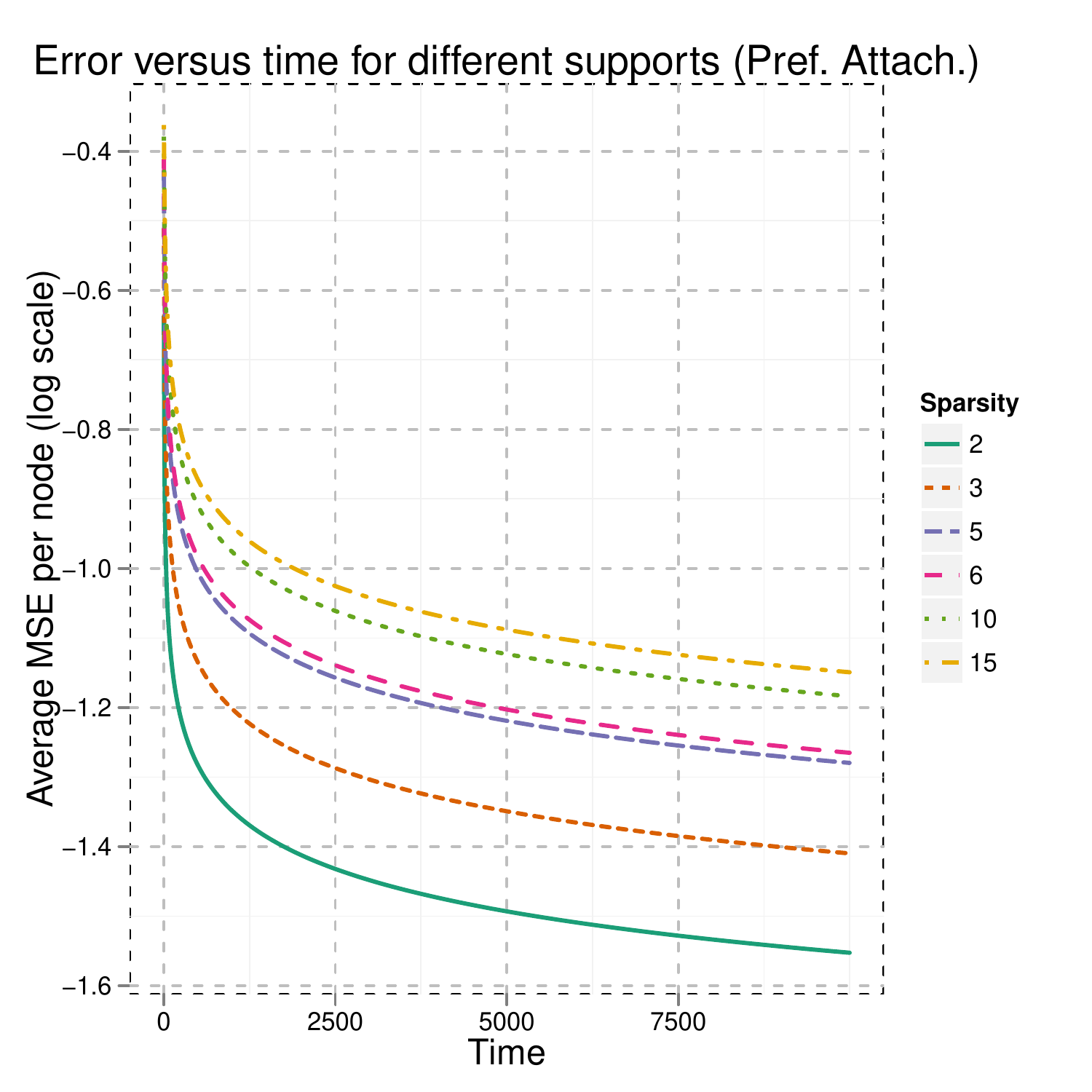}}%
\subfigure[Watts-Strogatz]{\includegraphics[width=2.6in]{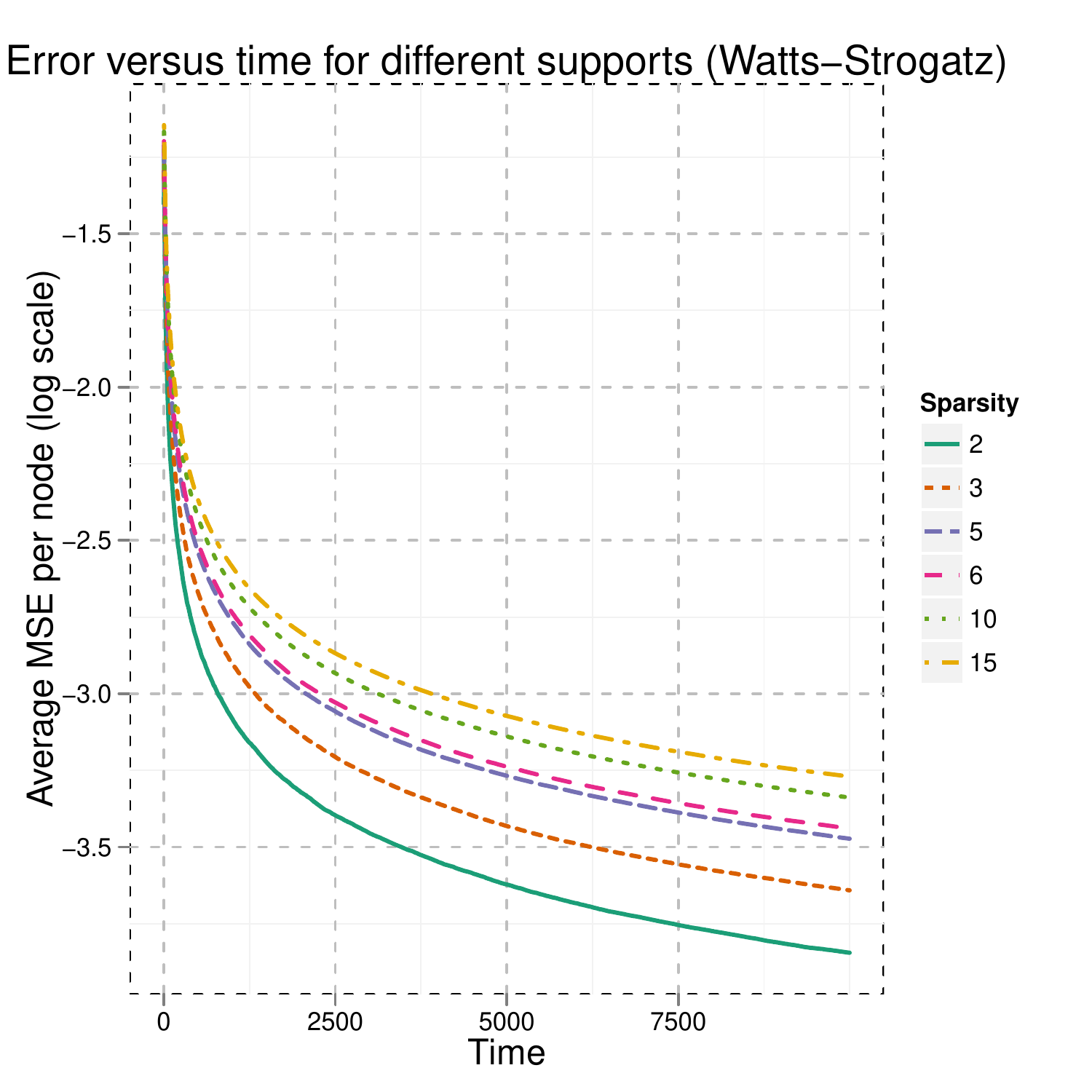}}
\caption{Average MSE per node versus time (on a $\log_{10}(\cdot)$ scale) versus time for different support sizes for four different network topologies.  The distribution is uniform on a subset of size $M^{\ast} = 2$ to $M^{\ast} = 15$ (the Sparsity level)  out of $M = 150$.  The MSE decays much more quickly for sparser distributions for the same alphabet size $M$.  \label{fig:EVTSparse}}
\end{figure*}

\begin{figure*}
\centering
\subfigure[Erd\H{o}s-R\'{e}nyi]{\includegraphics[width=2.6in]{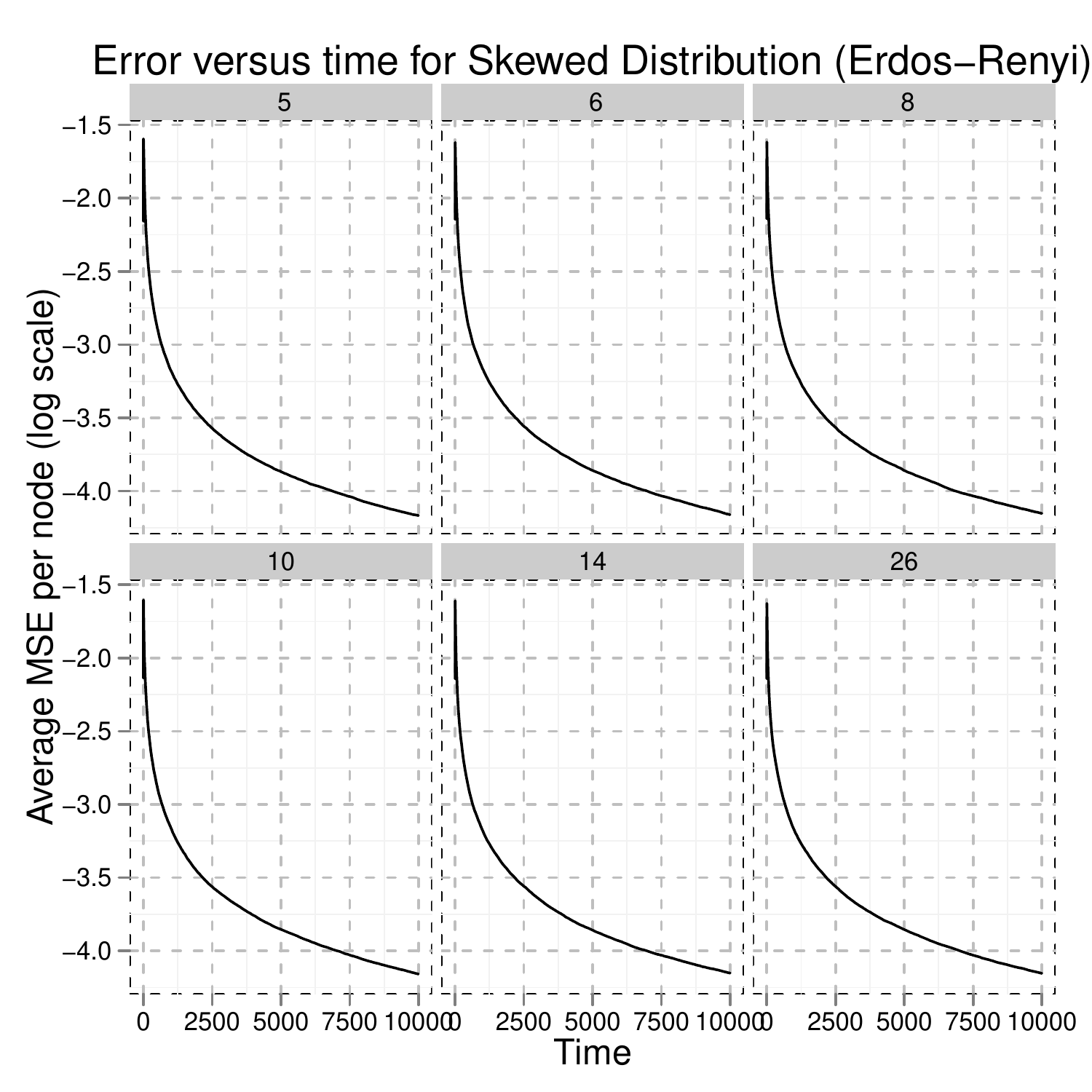}}%
\subfigure[Grid]{\includegraphics[width=2.6in]{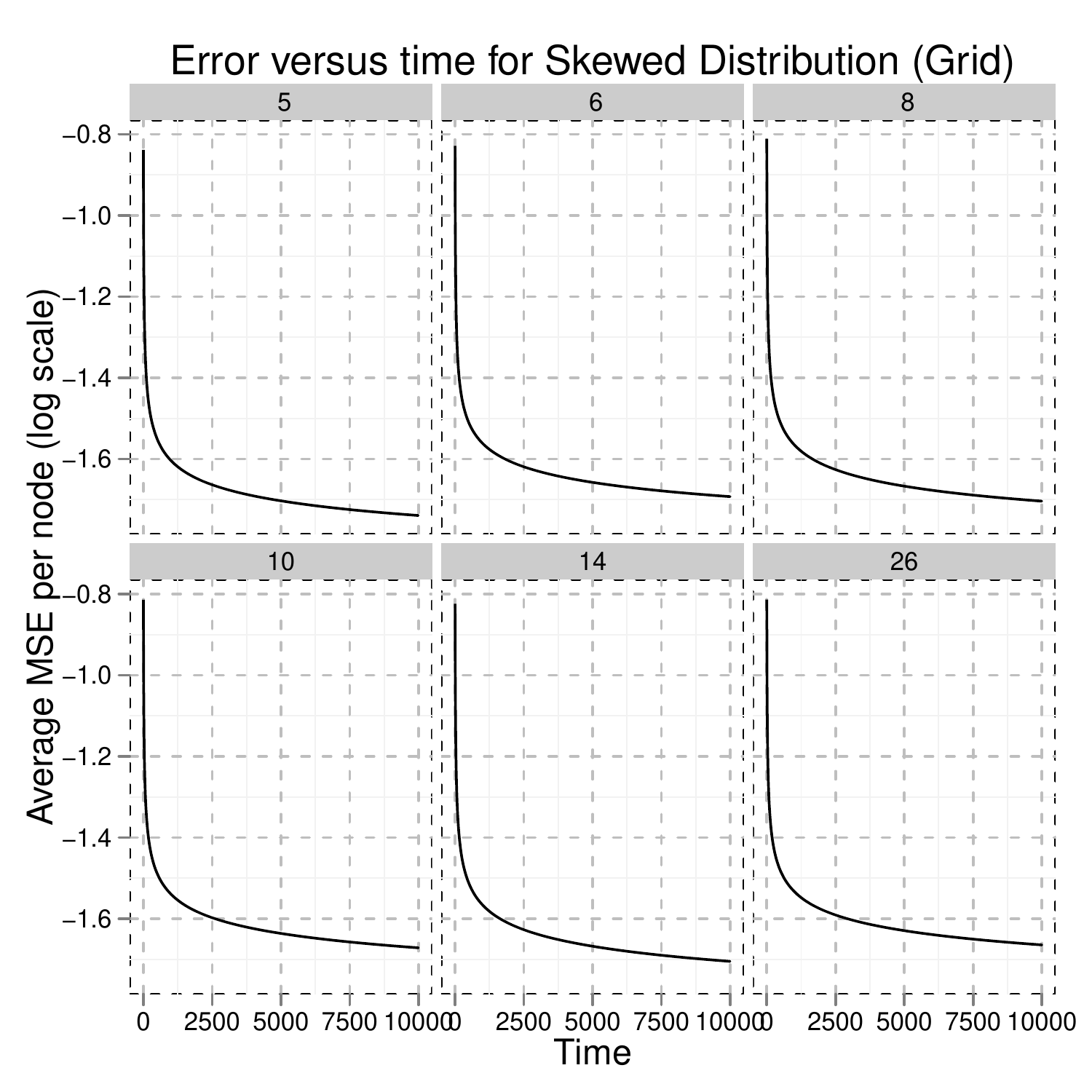}} \\
\subfigure[Preferential attachment]{\includegraphics[width=2.6in]{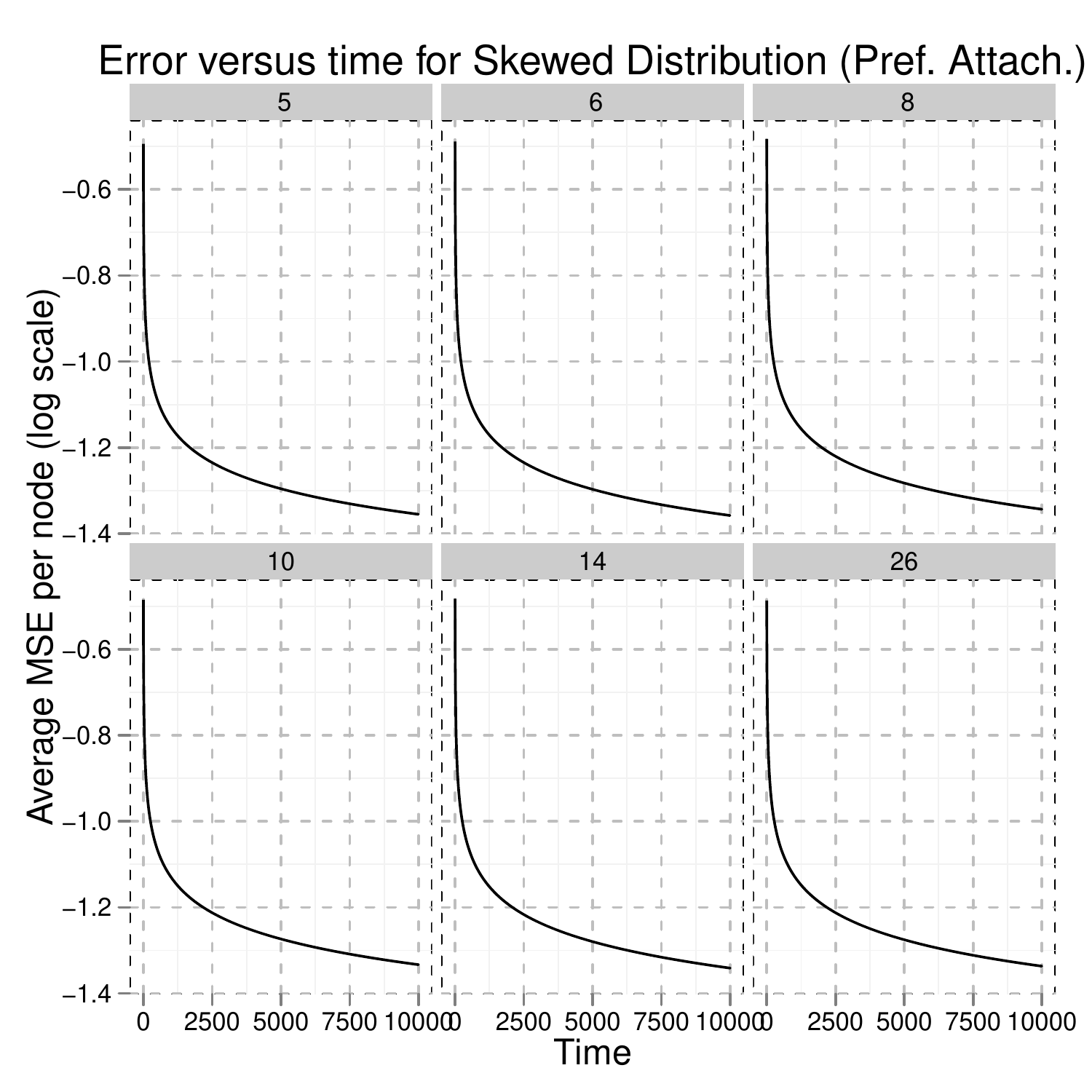}}%
\subfigure[Watts-Strogatz]{\includegraphics[width=2.6in]{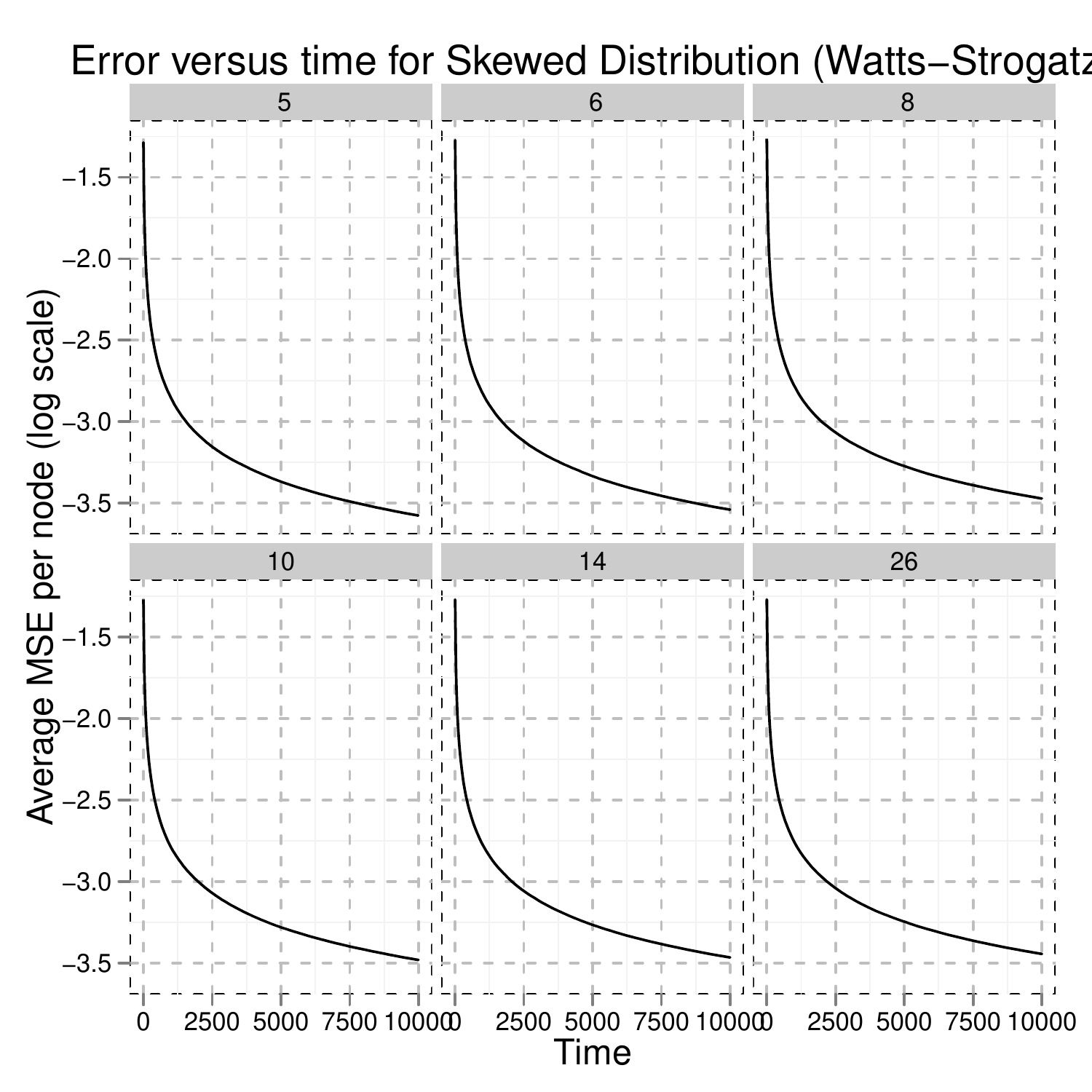}}
\caption{Average MSE per node versus time (on a $\log_{10}(\cdot)$ scale) versus time for different $M$ for a distribution in \eqref{eq:exp:skew} that is skewed with larger $M$.  In general, the error is dominated by the convergence on the larger elements of the histogram.  \label{fig:EVTSkew}}
\end{figure*}

The size and shape of the histogram to be estimated also affects the rate of convergence.  To illustrate this, we sampled  initial values from a uniform distribution on $M$ items for different values of $M$.  Figure \ref{fig:tvm2} shows the average time to get to an MSE of $10^{-2}$ versus $M$ for this scenario.  Here the effect of the network topology is quite pronounced; topological features such as the network diameter seem to have a significant impact on the time to convergence.

To see the effect of the number of nonzero elements in a fixed example we simulated the Erd\H{o}s-R\'{e}nyi, grid, preferential attachment, and Watts-Strogatz models for $n = 100$ with initial values for the nodes sampled from \resp{ a set of sparse distributions with different values of $M$.  
More precisely, we considered a sparse distribution over $M = 150$ bins where the actual distribution of opinions is uniform on an (unknown) sparse subset of $M^{\ast} \ll M$  bins where in our simulations, $M^{\ast}$ ranges from $M^{\ast} = 2$ to $M^{\ast} = 15$. } The effect of this ``sparsity'' is shown in Figure \ref{fig:EVTSparse}, where the log MSE per node is plotted against the number of time steps of the algorithm. Here the difference in the network topologies is more stark -- for the  Erd\H{o}s-R\'{e}nyi graph the effect of changing the number of elements is negligible, but the average MSE per node increases in the other three graph models.  The difference is greatest in the preferential attachment model, where the increase in $M$ corresponds to a nearly linear increase in the log MSE.

Next we consider a closely related question regarding the shape of the histogram to be estimated. In particular, we 
considered initial distributions which are heavily concentrated on a few elements but still contain 
many elements with relatively low popularity.  Specifically, in our simulations we chose the initial values to be drawn from the following 
distribution 
	\begin{align}
	\Pi = \left(0.38, 0.38, \frac{0.24}{M-2}, \ldots, \frac{0.24}{M-2} \right)
	\label{eq:exp:skew}
	\end{align}
for values of $M$ ranging from $M = 5$ to $M = 26$.  We simulated each network 50 times, uniformly assigning the initial values to the nodes.  The average error is shown in Figure \ref{fig:EVTSkew}.  Here we see that when the distribution is biased such that most of the weight is on the first two elements, the support size $M$ does not have an appreciable effect on the convergence time.  What Figure \ref{fig:EVTSkew} suggests is that the shape of the distribution is more important than the support.  This is not surprising -- because we are measuring squared error, elements in $\Pi$ which are small will contribute relatively little to the overall error and so in this sense the uniform distribution is the ``worst case'' for convergence.  In these scenarios, different measures of convergence may be important, such as the Kullback-Leibler divergence between the estimated distributions and $\Pi$.  Other quantities related to $\Pi$ may impact the rate of convergence of the algorithm.

\section{Discussion}

In this paper we studied a simple model of message passing in which nodes or agents communicate random messages generated from their current estimates of a global distribution.  The message model is inspired by models of social messaging in which agents communicate only a part of their current beliefs at each time.  This family of processes contains several interesting instances, including a recent consensus-based model for language formation and an exchange-based algorithm that results in agents learning the true distribution of initial opinions in the network \cite{Narayanan11:lang}.  To analyze this latter process we found a stochastic optimization procedure corresponding to the algorithm.  The simulation results confirm the theory and also show that while the topology of the network affects the rate of convergence, the shape of the overall histogram $\Pi$ may play larger role than its support size when considering $L_2$ convergence.

One interesting theoretical question is whether the error $\sqrt{t} (\mbf{Q}(t) - \mbf{Q}^{\ast})$ converges to a normal distribution when $\delta(t) = 1/t$.  Such a result was obtained by Rajagopal and Wainwright~\cite{5575452} for certain cases of noisy communication in consensus schemes for scalars.  They showed a connection between the network topology and the covariance of the normalized asymptotic error.  Such a result will not transfer immediately to our scenario because of the additional perturbation term $\mbf{C}(t)$.  However, because this term decays rapidly, we do not believe it will impact the covariance matrix.  Characterizing the asymptotic distribution of the error in terms of the graph topology, $M$, and $\Pi$ may yield additional insights into the convergence rates in terms of measures other than $L_2$ norm of error vector.

The results in this paper also apply to the ``gossip'' scenario wherein only one pair of nodes exchanges messages at a time.  This corresponds to selecting a random graph $\mc{G}(t)$ which contains only a single edge.  In terms of time, the convergence in this setting will be slower because only one pair of messages is exchanged in a single time slot.  The analysis framework is fairly general -- to get the almost-sure convergence we need mild assumptions on the message distributions.  Both finding other interesting instances of the algorithm and extending the analysis for metrics such as divergence and other statistical measures are interesting directions for future work.  Solving the latter problem may yield some new techniques for analyzing other statistical procedures which can be cast as stochastic optimization, such as empirical risk minimization.

This model of random message passing may be useful in other contexts such as inference and optimization.  Stochastic coordinate ascent is used in convex optimization over large data sets; extending this framework to the distributed optimization setting is a promising future direction, especially for high-dimensional problems.  In belief propagation, stochastic generation of beliefs can ensure convergence even when the state space is very large~\cite{NoorshamsW:11sbp}.  Finally, the framework here can also be applied to a model for distributed parametric inference in social networks~\cite{MolaviJ:12cdc,JadbabaieMST:12,MolaviRTJ:12acc} in which agents both observe and communicate over time.  In these applications and in others, the same ideas behind the social sampling model in this paper appear to be useful in reducing the message complexity while allowing consistent inference in distributed setting.

\section*{Acknowledgements}

The authors would like to thank Vivek Borkar for helpful suggestions regarding stochastic approximation techniques, and the anonymous reviewers, whose detailed feedback helped improve the organization and exposition of the paper.

\bibliographystyle{IEEEtran}
\bibliography{socialsampling}

\end{document}